\DeclarePairedDelimiter{\floor}{\lfloor}{\rfloor}
\DeclarePairedDelimiter{\ceil}{\lceil}{\rceil}
\DeclarePairedDelimiter{\card}{|}{|}
\newtheorem{thm}{Theorem}
\newtheorem{prop}[thm]{Proposition}
\newtheorem{lem}[thm]{Lemma}
\newtheorem{cor}[thm]{Corollary}
\newtheorem{conj}[thm]{Conjecture}
\numberwithin{thm}{section}
\theoremstyle{definition}
\newtheorem{dfn}[thm]{Definition}
\newtheorem{obs}[thm]{Observation}
\newtheorem{prob}{Problem}
\theoremstyle{remark}
\theoremstyle{definition}
\newcommand{\mc}[1]{\mathcal{#1}}
\def\cH{{\mathcal H}}
\def\COMMENT#1{}
\let\COMMENT=\footnote
\newcommand{\m}[1]{\mathcal{#1}}
\DeclareMathOperator{\Ryser}{Ryser}
\title{Ryser's Conjecture for $t$-intersecting hypergraphs}
\author{Anurag Bishnoi
        \thanks{Department of Applied Mathematics, Technische Universiteit Delft, Delft, Netherlands.
        E-mail: {\tt
        anurag.2357@gmail.com}. Research supported in part by a Humboldt Research Fellowship for Postdoctoral Researchers and by a Discovery Early Career Award of the Australian Research Council (No.~DE190100666).}
    \and
        Shagnik Das\thanks{Institut f\"ur Mathematik, Freie Universit\"at Berlin, 14195 Berlin, Germany.}
        \thanks{E-mail: {\tt shagnik@mi.fu-berlin.de}. Research supported in part by GIF grant G-1347-304.6/2016 and by the Deutsche Forschungsgemeinschaft (DFG) - Project 415310276.}
    \and
        Patrick Morris\footnotemark[2]
        \thanks{E-mail: {\tt pm0041@mi.fu-berlin.de}. Research supported in part by a Leverhulme Trust Study Abroad Studentship (SAS-2017-052\textbackslash 9) and by the Deutsche Forschungsgemeinschaft (DFG, German Research
Foundation) under Germany's Excellence Strategy - The Berlin Mathematics
Research Center MATH+ (EXC-2046/1, project ID: 390685689).}
    \and
        Tibor Szab\'{o}\footnotemark[2]
        \thanks{E-mail: {\tt szabo@math.fu-berlin.de}. Research supported in part by GIF grant G-1347-304.6/2016.}}
\begin{document}
\maketitle

\begin{abstract}
A well-known conjecture, often attributed to Ryser, states that the cover number of an $r$-partite $r$-uniform hypergraph is at most $r - 1$ times larger than its matching number.  Despite considerable effort, particularly in the intersecting case, this conjecture remains wide open, motivating the pursuit of variants of the original conjecture.  Recently, Bustamante and Stein and, independently, Kir\'{a}ly and T\'{o}thm\'{e}r\'{e}sz  considered the problem under the assumption that the hypergraph is $t$-intersecting, conjecturing that the cover number $\tau(\m H)$ of such a hypergraph $\m H$ is at most $r - t$.  In these papers, it was proven that the conjecture is true for $r \leq 4t-1$, but also that it need not be sharp; when $r = 5$ and $t = 2$, one has $\tau(\mc H) \le 2$.

We extend these results in two directions. First, for all $t \ge 2$ and $r \le 3t-1$, we prove a tight upper bound on the cover number of these hypergraphs, showing that they in fact satisfy $\tau(\m H) \leq \floor{(r - t)/2} + 1$. Second, we extend the range of $t$ for which the conjecture is known to be true, showing that it holds for all $r \le \frac{36}{7}t-5$. 
We also introduce several related variations on this theme.  As a consequence of our tight bounds, we resolve the problem for $k$-wise $t$-intersecting hypergraphs, for all $k \ge 3$ and $t \ge 1$.  We further give bounds on the cover numbers of strictly $t$-intersecting hypergraphs and the $s$-cover numbers of $t$-intersecting hypergraphs.
\end{abstract}

\section{Introduction}
We define an $r$-uniform hypergraph $\cH$ to be \emph{$r$-partite} if one can partition the vertex set into $r$ parts, say $V(\cH)= P_1\sqcup \dots \sqcup P_r$, such that for all $e\in E(\cH)$ and all $j\in [r]$, we have $|e\cap P_j|=1$. For $1 \leq t \leq r-1$,  an $r$-uniform hypergraph is  \emph{$t$-intersecting} if $|e\cap f|\ge t$ for all $e,f\in E(\cH)$. We will refer to $r$-uniform $r$-partite $t$-intersecting hypergraphs as \emph{$(r,t)$-graphs} throughout. 

For an $r$-uniform hypergraph $\cH$, a set of vertices $C\subset V(\cH)$ is a \emph{cover} of the hypergraph if $C\cap e \neq \emptyset$ for all $e\in E(\cH)$. The \emph{cover number} of the hypergraph, denoted $\tau(\cH)$, is the smallest cardinality of a cover for the hypergraph.  Our goal is to bound the cover numbers of $(r,t)$-graphs. 
\begin{dfn} \label{def:extremalfunction} We define the following extremal function:
\[\Ryser(r,t) = \max \, \{ \tau(\m H) : \m H \text{ is an $(r,t)$-graph} \}.\]
\end{dfn}

The problem is motivated by an old unsolved conjecture of Ryser from around 1970, first appearing in the PhD thesis of his student Henderson~\cite{Henderson71} (see~\cite{Best-Wanless18} for more on the history of this conjecture), which claims that the cover number of any $r$-uniform $r$-partite hypergraph $\m H$ satisfies $\tau(\m H) \le (r - 1) \nu(\m H)$. Here, $\nu(\m H)$ denotes the matching number of the hypergraph, that is, the size of the largest set of pairwise disjoint edges. 
When $r = 2$, this is a statement about bipartite graphs, and is equivalent to the classic theorem of K\"onig~\cite{Koenig}.  Although Ryser's Conjecture has attracted significant attention over the years, the only other resolved case is $r = 3$. This was proven via topological methods by Aharoni~\cite{Aharoni01}, with the extremal hypergraphs classified by Haxell, Narins and Szab\'o~\cite{HNS18}.

In this latter result, it was shown that the extremal hypergraphs with $\nu(\m H) = \nu \ge 2$ can essentially be decomposed into $\nu$ extremal intersecting hypergraphs. Thus, much research in this direction has focussed on intersecting hypergraphs.\footnote{Although recent constructions of Abu-Khazneh~\cite{Abu16}, for $\nu = 2$ and $r = 4$, and Bishnoi and Pepe~\cite{BP18}, for $\nu \ge 2$ and all $r \ge 4$ with $r-1$ a prime power, show that one cannot simply reduce the general case to the intersecting one.}  
Here we have $\nu(\m H) = 1$, and Ryser's Conjecture asserts $\Ryser(r,1) \le r-1$. Further motivation for considering the intersecting case comes from a connection with a conjecture of Gy\'arf\'as~\cite{gyarfasconj} which states that the vertices of any $r$-edge-coloured clique can be covered by at most $r-1$ monochromatic trees. Indeed, this conjecture in the setting of coloured complete graphs is in fact equivalent to Ryser's conjecture for intersecting hypergraphs, see e.g.~\cite{kirsolo}.
Not much more is known even in this simpler setting; Tuza~\cite{Tuza} proved the conjecture for $r \le 5$, but it remains otherwise open. The apparent difficulty of this conjecture is perhaps explained by the abundance of extremal constructions: the classic example of truncated projective planes shows $\Ryser(r,1) \ge r-1$ whenever $r-1$ is a prime power, while Abu-Khazneh, Bar\'at, Pokrovskiy and Szab\'o~\cite{ABPS19} construct exponentially (in $\sqrt{r}$) many non-isomorphic minimal examples whenever $r-2$ is a prime power. For general $r$, Haxell and Scott~\cite{HS17} construct nearly-extremal intersecting hypergraphs; more precisely, they show $\Ryser(r,1) \ge r-4$ for all $r$ large enough.

  This led Bustamante and Stein~\cite{bustastein}  and, independently, Kir\'aly and T\'othm\'er\'esz~\cite{kirtot}  to investigate what occurs when we impose the stricter condition of the hypergraph $\m H$ being $t$-intersecting. In this case, any subset of $r-t+1$ vertices from an edge must form a cover, and so we trivially have $\tau(\m H) \le r-t + 1$. While one can construct $r$-uniform $t$-intersecting hypergraphs attaining this bound, it was conjectured that, as in Ryser's Conjecture, one can do better when the hypergraph is also $r$-partite; that is, when considering $(r,t)$-graphs.

\begin{conj}[Bustamante--Stein~\cite{bustastein}, Kir\'aly--T\'othm\'er\'esz~\cite{kirtot}] \label{conj:weak}
For all $1 \le t \le r-1$, we have
\[\Ryser(r,t)\leq r-t.\]
\end{conj}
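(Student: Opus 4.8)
The plan is to fix one edge and to measure every other edge by how far it deviates from it. Fix an edge $e=\{e_1,\dots,e_r\}$ with $e_i\in P_i$, and for every edge $f$ record its \emph{disagreement set} $D(f)=\{i\in[r]:f\cap P_i\neq\{e_i\}\}$. Since $\cH$ is $t$-intersecting we have $|e\cap f|\ge t$, hence $|D(f)|=r-|e\cap f|\le r-t$ for every $f$. The value of this encoding is that a very cheap family of covers becomes available: for any $A\subseteq[r]$ the set $C_A=\{e_i:i\in A\}$ covers exactly those edges $f$ with $A\not\subseteq D(f)$, so the edges left uncovered by $C_A$ are precisely those with $D(f)\supseteq A$. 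Taking $|A|=r-t$, the bound $|D(f)|\le r-t$ forces every uncovered edge to satisfy $D(f)=A$ exactly. Thus, to obtain a cover of size $r-t$ it suffices to exhibit an $(r-t)$-subset $A\subseteq[r]$ that is \emph{not realised} as the disagreement set of any edge. (Choosing $|A|=r-t+1$, which can never sit inside a disagreement set, merely reproves the trivial bound $\Ryser(r,t)\le r-t+1$; the target bound $r-t$ is exactly the statement that an unrealised $(r-t)$-set exists.)

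I would first carry this idea out cleanly in the regime $r\le 2t$. Suppose for contradiction that no $(r-t)$-subset works, i.e.\ every $A\in\binom{[r]}{r-t}$ equals $D(f_A)$ for some edge $f_A$. When $r\le 2t$ we may choose two \emph{disjoint} sets $A,A'\in\binom{[r]}{r-t}$, since this amounts to $2(r-t)\le r$. On $A$ the edge $f_{A'}$ agrees with $e$ while $f_A$ disagrees with $e$, and symmetrically on $A'$; hence $f_A$ and $f_{A'}$ disagree with one another on all of $A\cup A'$ and can only agree on $[r]\setminus(A\cup A')$. Therefore $|f_A\cap f_{A'}|\le r-2(r-t)=2t-r<t$, contradicting the $t$-intersecting property. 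Consequently some $(r-t)$-set is unrealised, and $\Ryser(r,t)\le r-t$ whenever $r\le 2t$.

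To go beyond $r\le 2t$ the obstruction is exactly that two disjoint $(r-t)$-sets no longer fit inside $[r]$, so the clean contradiction is unavailable and one must spend part of the budget off the edge $e$. The refined plan is to split the budget as $r-t=a+b$, use $a$ vertices of the form $e_i$ indexed by a well-chosen $A$ with $|A|=a$, and then cover the residual family $\cH_A=\{f:D(f)\supseteq A\}$ with $b$ further vertices. The residual family is again $t$-intersecting, but now all of its edges deviate from $e$ on $A$, which both constrains their structure and permits recursion; optimising the trade-off between $a$ and the cover cost of $\cH_A$ is what should yield the tight bound $\floor{(r-t)/2}+1$ in the range $r\le 3t-1$ (via a recursion that strips two coordinates at a time) and, with a more careful accounting of the residual structure, the extended range $r\le\tfrac{36}{7}t-5$ for the conjecture itself. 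The main obstacle — and the reason the conjecture cannot be settled in full by this route — is that control of $\cH_A$ degrades as the ratio $r/t$ grows: at $t=1$ the statement is precisely the long-open intersecting case of Ryser's Conjecture, and truncated-projective-plane-type families show that the residual configurations genuinely resist a single-vertex mop-up once $r$ is large relative to $t$. The whole game is therefore to push the ratio $r/t$ as far as the residual analysis will allow.
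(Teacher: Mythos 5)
This statement is a conjecture, not a theorem of the paper: it is open in general (at $t=1$ it is precisely the intersecting case of Ryser's Conjecture), and the paper itself establishes it only in restricted ranges, so no complete argument can be expected from you either. Judged on those terms, the rigorous kernel of your proposal is correct and takes a genuinely different route from the paper's. You fix a single reference edge $e$, encode every other edge by its disagreement set, and observe that an \emph{unrealised} $(r-t)$-set $A$ yields the cover $C_A=\{e_i : i\in A\}$; when $r\le 2t$ two disjoint realised sets would force $|f_A\cap f_{A'}|\le 2t-r<t$, a contradiction, so an unrealised set exists. This is sound and proves $\Ryser(r,t)\le r-t$ for $r\le 2t$. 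The paper instead builds covers inside the intersection of two edges: taking $e_1,e_2$ with $|e_1\cap e_2|=t$ minimal and $s=\lfloor (r-t)/2\rfloor+1$ vertices of $e_1\cap e_2$, the degree-sum bound $\Delta_\cH(C;e_1,e_2)\le 2t-1$ of Observation~\ref{obs:cover} and Lemma~\ref{lem:2edges} shows $C$ is a cover. That buys two things your method does not: the stronger bound $\lfloor (r-t)/2\rfloor+1$ (tight, by Theorem~\ref{thm:exactvalue}) and the wider range $r\le 3t-1$. One small logical caveat: your parenthetical claim that the bound $r-t$ ``is exactly the statement that an unrealised $(r-t)$-set exists'' overstates things --- unrealised sets are sufficient for such a cover, not necessary, since a minimum cover need not have the form $C_A$.

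The genuine gap is everything beyond $r\le 2t$. Your proven range is narrower even than what the conjecture's proposers already had (Bustamante--Stein proved $r\le 2t+2$, Kir\'aly--T\'othm\'er\'esz $r\le 4t-1$), and far short of the paper's range $r\le \frac{36t-17}{7}$ minus eight exceptional pairs (Corollary~\ref{cor:comparisons}). The recursion you sketch --- splitting $r-t=a+b$ and covering the residual family $\cH_A$ of edges disagreeing with $e$ on all of $A$ --- is not carried out, and the difficulty sits exactly there: once $r>2t$, edges realising disjoint disagreement sets can legitimately coexist by overlapping heavily \emph{outside} $e$, so the residual family is constrained only through three-way intersection patterns, not through comparison with $e$ alone. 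Controlling such configurations is what the paper's three-edge machinery does (Lemma~\ref{lem:3edges} combined with Corollary~\ref{cor:2edgeagain}, assembled with a careful choice of parameters in Proposition~\ref{prop:3edgeupperbound}), and nothing in your sketch substitutes for that accounting; in particular there is no evidence that the ``strip two coordinates at a time'' recursion closes even the case $r\le 3t-1$, let alone the wider range. So: correct and elegant for $r\le 2t$, but the extension plan is a program, not a proof.
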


Note that while Ryser's Conjecture for intersecting hypergraphs is a special case ($t = 1$), it in fact implies Conjecture~\ref{conj:weak}. Indeed, $\Ryser(r, t) \leq \Ryser(r - t + 1, 1)$ since deleting $t - 1$ parts and removing the deleted vertices from each edge leaves us with an $(r - t + 1, 1)$-graph, which, by Ryser's Conjecture, should have a cover of size at most $r-t$. 

Therefore, one might hope to be able to make progress on Conjecture~\ref{conj:weak} for larger values of $t$, and indeed, results have been obtained when $t$ is linear in $r$.  Bustamante and Stein~\cite{bustastein} proved the conjecture for $r \le 2t + 2$, with Kir\'{a}ly and T\'{o}thm\'{e}r\'{e}sz~\cite{kirtot} extending this to $r \le 4t - 1$. With regards to lower bounds on $\Ryser(r,t)$, the conjecture is trivially tight for $t=r-1$, and Bustamante and Stein~\cite{bustastein} showed that it is also tight for $t=r-2$. However, they demonstrated that it is not always best possible by proving $\Ryser(5, 2) = 2$. More generally, they proved $\Ryser(r,t) \ge \Ryser(\floor{\frac{r}{t}}, 1)$ by observing that replacing every vertex of an $(r',1)$-graph with a set of $t$ vertices gives an $(r't, t)$-graph. Given the aforementioned results on Ryser's conjecture, this shows $\Ryser(r,t) \ge \floor{r/t} - 1$ for many pairs $(r,t)$, and Bustamante and Stein suggested this lower bound should be closer to the truth than the upper bound of Conjecture \ref{conj:weak}.

\subsection{Our results}

Our first result shows that the lower bound of Bustamante and Stein is, in fact, far from optimal. Indeed, we provide a construction, valid for all $t$ and $r$, that greatly improves on the previous lower bound when $t\geq 3$.

\begin{thm} \label{thm:lowerbound}
For all $1 \le t \le r$ we have 
\[\Ryser(r,t)\geq \floor*{\frac{r-t}{2}}+1.\]
\end{thm}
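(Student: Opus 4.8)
The plan is to exhibit, for every pair $(r,t)$, a single $(r,t)$-graph whose cover number is at least $\floor*{\frac{r-t}{2}}+1$, and the natural candidate is a Hamming ball. Set $\rho = \floor*{\frac{r-t}{2}}$, let $q$ be any integer with $q \ge \rho+1$, and take each part $P_j$ ($j \in [r]$) to be a set of $q$ vertices, labelled $0,1,\dots,q-1$. I identify an edge with the vector $(e_1,\dots,e_r)$ recording which vertex it uses in each part, and let $\cH$ be the set of all such vectors $e$ with $\card{\{j : e_j \neq 0\}} \le \rho$, i.e.\ the ball of radius $\rho$ about the all-zero vector. By construction $\cH$ is $r$-uniform and $r$-partite.

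First I would verify that $\cH$ is $t$-intersecting. Any two edges $e,f$ each differ from $\mathbf 0$ in at most $\rho$ coordinates, so by the triangle inequality for Hamming distance they differ from one another in at most $2\rho$ coordinates; hence they agree in at least $r-2\rho \ge r-(r-t)=t$ coordinates, giving $\card{e\cap f}\ge t$. Thus $\cH$ is a legitimate $(r,t)$-graph, and everything reduces to bounding $\tau(\cH)$ from below.

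The heart of the argument is the claim $\tau(\cH)\ge \rho+1$, which I would prove by a direct evasion argument rather than any LP/fractional bound (the latter is genuinely too weak, since small cases such as $r=4,t=2$ already exhibit an integrality gap). Suppose for contradiction that $C$ is a cover with $\card{C}\le\rho$, and let $J\subseteq[r]$ be the set of parts that meet $C$, so $\card{J}\le\card{C}\le\rho$. For each $j\in J$ the forbidden label set $\{v : (j,v)\in C\}$ has size at most $\card{C}\le\rho<q$, so I can pick a label $w_j\in P_j$ avoiding all of them; for $j\notin J$ I set $w_j=0$. The resulting vector $w$ is nonzero in at most $\card{J}\le\rho$ coordinates, hence $w\in E(\cH)$, yet $w$ avoids every vertex of $C$: at each part $j\in J$ its label $w_j$ was chosen to differ from every $v$ with $(j,v)\in C$, and the parts outside $J$ contain no cover vertex. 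This contradicts $C$ being a cover, so every cover has size at least $\rho+1$, which is exactly $\floor*{\frac{r-t}{2}}+1$.

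The one point that needs care — and the only real obstacle — is the interaction between the part sizes and the evasion step: the construction works precisely because each part is large enough ($q\ge\rho+1$) for the evasive edge $w$ to simultaneously dodge all $\le\rho$ forbidden labels, so one must not attempt a fixed small alphabet. The boundary cases are then routine: when $r-t$ is odd the ball has diameter $2\rho=r-t-1$, which only strengthens the $t$-intersecting property, and when $t=r$ the ball collapses to the single edge $\mathbf 0$ with $\tau=1$, again matching the claimed value.
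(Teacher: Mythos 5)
Your proposal is correct and takes essentially the same approach as the paper: both construct a Hamming-ball-type $(r,t)$-graph of radius $\rho = \floor*{\frac{r-t}{2}}$ centred at a fixed transversal, verify the $t$-intersecting property via the triangle inequality through that centre, and prove $\tau \ge \rho + 1$ by exhibiting an edge that evades any purported cover of size at most $\rho$. The only difference is cosmetic: the paper gives each edge private degree-one vertices off the centre (so a minimum cover may be assumed to lie in the central transversal, after which pigeonhole applies), whereas you use a shared alphabet of size at least $\rho+1$ in each part to dodge the cover directly.
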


We next prove a matching upper bound when $t$ is large, showing that when $r$ is less than thrice $t$, the true value of $\Ryser(r,t)$ is half the bound of Conjecture~\ref{conj:weak}.

\begin{thm} \label{thm:exactvalue}
For $t,r \in \mathbb{N}$ such that $t+1\leq r \leq  3t-1$, we have 
\[\Ryser(r,t)=\floor*{\frac{r-t}{2}}+1.\]
\end{thm}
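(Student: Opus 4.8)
The plan is to establish the theorem by combining two bounds. The lower bound $\Ryser(r,t) \geq \floor{(r-t)/2}+1$ is already provided by Theorem~\ref{thm:lowerbound}, which holds for all $t \le r$ and in particular throughout the range $t+1 \le r \le 3t-1$. So the entire content of this theorem lies in proving the matching \emph{upper} bound
\[
\Ryser(r,t) \le \floor*{\frac{r-t}{2}}+1
\qquad\text{whenever } t+1 \le r \le 3t-1.
\]
That is, I must show every $(r,t)$-graph $\cH$ in this regime admits a cover of size at most $\floor{(r-t)/2}+1$.

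\medskip

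\noindent\emph{Setting up the induction.}
I would argue by induction, aiming to reduce $r$ by steps of $2$ while keeping $t$ fixed, so that $\floor{(r-t)/2}+1$ drops by exactly $1$ at each step and we make steady progress toward a base case. The base cases are the smallest values: when $r-t \in \{1,2\}$ we need a cover of size at most $2$ (and when $r=t+1$ the hypergraph is $(r-1)$-intersecting, forcing essentially a sunflower-like structure with a single common core). The strategy for the inductive step is to pick a carefully chosen pair of vertices, add them to the cover, and delete the two parts containing them; if after deletion the remaining hypergraph is still $t$-intersecting on $r-2$ parts, induction finishes the job since $\floor{((r-2)-t)/2}+1 = \floor{(r-t)/2}$. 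The crux is therefore: \textbf{locate two vertices whose two parts can be removed while preserving $t$-intersection of the edges that survive}, where an edge "survives" if it is not already hit by the two chosen vertices.

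\medskip

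\noindent\emph{The key structural tool and main obstacle.}
The reason the range $r \le 3t-1$ is special is the pigeonhole fact that any two edges $e,f$ of an $(r,t)$-graph satisfy $|e \cap f| \ge t > (r-t) \ge |e \setminus f|$, so each edge is determined on more than half of its coordinates by any other; intuitively the edges are highly overlapping. I would exploit this to understand the "disagreement pattern": for each pair of parts, look at whether edges can differ there. The natural approach is to consider a fixed edge $e = \{v_1,\dots,v_r\}$ with $v_j \in P_j$ and analyse, for each other edge $f$, the set $D(f) = \{ j : f \cap P_j \ne \{v_j\}\}$ of parts where $f$ disagrees with $e$; the $t$-intersecting condition forces $|D(f)| \le r-t$ for every $f$. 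I would then select two parts, say realized by two vertices of $e$, to delete so that the remaining edges still pairwise meet in $\ge t$ vertices. The main obstacle I anticipate is guaranteeing that \emph{both} the intersection size is preserved \emph{and} all genuinely uncovered edges remain $t$-intersecting after the deletion — a naive choice of parts can destroy $t$-intersection among surviving edges. Handling this likely requires a finer case analysis (possibly splitting on whether the disagreement sets $D(f)$ are "large" versus "small", or whether a common vertex exists in many parts), and choosing the deleted pair adaptively so that edges with large disagreement get covered by the two chosen vertices while edges with small disagreement retain their pairwise overlap.

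\medskip

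\noindent\emph{Contingency.}
If the clean two-part deletion cannot always be arranged, the fallback is a direct argument in the tightest sub-range $2t \le r \le 3t-1$ (where $\floor{(r-t)/2}+1$ is largest relative to the trivial bound $r-t+1$), combined with using the already-established result that Conjecture~\ref{conj:weak} holds for $r \le 4t-1$ (which covers our whole range and gives $\tau(\cH) \le r-t$) as a starting bound that I then sharpen down to $\floor{(r-t)/2}+1$ by the covering-and-deletion scheme above. I expect the verification that the chosen two vertices cover all edges with large disagreement while the small-disagreement edges stay $t$-intersecting to be the technical heart of the proof.
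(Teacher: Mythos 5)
Your identification of the lower bound (Theorem~\ref{thm:lowerbound}) is correct, but the upper bound --- the entire content of the theorem --- is never actually established: you name the crucial step (``locate two vertices whose two parts can be removed while preserving $t$-intersection of the surviving edges'') and then defer it as an anticipated obstacle, so what you have is a plan rather than a proof. Worse, the plan cannot work as stated, for an arithmetic reason: deleting two parts lowers the target bound by exactly $1$ (from $\floor*{\frac{r-t}{2}}+1$ to $\floor*{\frac{r-t}{2}}$), yet you add \emph{two} vertices to the cover at each step, so you would need $2+\floor*{\frac{r-t}{2}}\le\floor*{\frac{r-t}{2}}+1$. Iterating, you pay $2$ vertices for every decrease of $2$ in $r-t$, so the scheme cannot beat roughly $r-t$ --- the trivial bound --- no matter how cleverly the deleted parts are chosen; a repaired version would have to add only \emph{one} vertex per two deleted parts, a much stronger structural demand than the one you pose, and your contingency paragraph inherits the same defect since it ``sharpens'' via the same scheme. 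In addition, your ``key structural tool'' $|e\cap f|\ge t > r-t\ge |e\setminus f|$ is false whenever $2t\le r\le 3t-1$ (it requires $r<2t$), which is precisely the sub-range you yourself single out as the hardest.

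For comparison, the paper's proof is a direct two-edge argument (Lemma~\ref{lem:2edges} via Observation~\ref{obs:cover}) with no part deletion at all. Choose $e_1,e_2\in E(\cH)$ with $|e_1\cap e_2|$ minimal; one may assume $|e_1\cap e_2|=t$, since otherwise $\cH$ is an $(r,t+1)$-graph and one concludes by induction on $r-t$. Now let $C$ be \emph{any} set of $s=\floor*{\frac{r-t}{2}}+1$ vertices of $e_1\cap e_2$; the hypothesis $r\le 3t-1$ is used exactly once, to guarantee $s\le t$ so that such a $C$ exists. If some edge $f$ avoided $C$, then summing contributions over the parts gives $|f\cap e_1|+|f\cap e_2|\le 2(t-s)+(r-t)=r+t-2s\le 2t-1$, so by pigeonhole $|f\cap e_i|\le t-1$ for some $i$, contradicting $t$-intersection. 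Hence $C$ is a cover, giving $\Ryser(r,t)\le\floor*{\frac{r-t}{2}}+1$, which together with Theorem~\ref{thm:lowerbound} proves the theorem. The cover lives entirely inside the intersection of two extremal edges --- there is no recursion on $r$, and no intersection property needs to be preserved --- which is exactly the mechanism your sketch is missing.
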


Theorem~\ref{thm:lowerbound} gives the lower bound needed for Theorem~\ref{thm:exactvalue}, and hence all that is required is a matching upper bound. In fact, using different arguments, we are able to prove a few upper bounds on $\Ryser(r,t)$. The theorem below collects 
the best upper bounds (excluding the trivial $\Ryser(r,t) \le r-t+1$) that we have in various ranges of the parameters.

\begin{thm} \label{thm:upperbound}
Let $1 \le t \le r$. Then 
\[\Ryser(r,t)\leq \begin{cases}
    \floor*{\frac{r-t}{2}}+1 & \mbox{ if }\quad t\leq r \leq 3t-1, \\
    2r-5t+2 & \mbox{ if }\quad 3t \leq r \le \frac{26t}{7}, \\
    \floor*{\frac{9r-14t}{8}}+2 & \mbox{ if }\quad \frac{26t}{7} \leq r \leq 5t-2, \\
    \floor*{\frac{15r-44t}{8}}+3 & \mbox{ if }\quad 5t-1\leq r \leq \frac{52t-13}{9}. 
\end{cases}\]
\end{thm}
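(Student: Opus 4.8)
The plan is to prove all four bounds through a single reduction strategy, using the first (tight) case both as the anchor of an induction and as the base into which the later cases are funnelled. Throughout, the central object will be a pair of edges $e_1,e_2$ realising the minimum pairwise intersection $m := \min_{e \neq f} \card{e \cap f} \ge t$, together with their common part $A := e_1 \cap e_2$, with $\card{A} = m$. The key structural inequality I would establish first is that, by minimality, every edge $f$ satisfies $\card{f \cap e_i} \ge m$ for $i \in \{1,2\}$; since $e_1 \setminus A$ and $e_2 \setminus A$ occupy disjoint parts, counting the difference parts that $f$ must meet forces $\card{f \cap A} \ge (3m-r)/2$. This one inequality drives everything: when $m$ is large relative to $r$ it already shows that a small subset of $A$ covers $\cH$, and when $m$ is close to $t$ it controls how the intersection structure degrades under deletion of parts.

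For the tight case $t \le r \le 3t-1$, I would argue by induction on $r-t$, aiming at the recursion $\Ryser(r,t) \le 1 + \Ryser(r-2,t)$, which together with the base cases $r \in \{t,t+1\}$ (where $\tau = 1$ is easily verified) yields exactly $\floor*{\frac{r-t}{2}}+1$; the matching lower bound is Theorem~\ref{thm:lowerbound}. The reduction itself splits on $m$: if $m \ge t+1$ there is slack, and one can delete a single part without destroying the $t$-intersecting property, reducing $r$ by one at no cost (a cover of the projected hypergraph lifts back verbatim). The genuinely tight regime is $m = t$, where the inequality above, combined with $r \le 3t-1$, forces every edge to meet $A$; a careful analysis of which parts are shared by the pairs of edges meeting in exactly $t$ vertices then lets me commit one cover vertex and delete two parts to land on an $(r-2,t)$-graph.

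To obtain the three bounds for $r \ge 3t$ (cases two through four), I would iterate reductions of the form $\Ryser(r,t) \le c + \Ryser(r-a, t-b)$ until the instance enters the region $r' \le 3t'-1$, at which point case one caps the cover number. Different admissible triples $(a,b,c)$ arise from how aggressively one can delete parts versus how many vertices one must commit to the cover, and the three distinct formulas $2r-5t+2$, $\floor*{\frac{9r-14t}{8}}+2$ and $\floor*{\frac{15r-44t}{8}}+3$ correspond to optimising the number and type of reduction steps in each regime. The thresholds $\tfrac{26t}{7}$, $5t-2$ and $\tfrac{52t-13}{9}$ are precisely the crossover points at which one admissible combination becomes cheaper than another, and I expect the denominators of $8$ together with these awkward fractional thresholds to reflect an amortised accounting over several reduction types rather than a single clean recursion.

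The main obstacle, in every case beyond the base, is preserving a usable intersecting structure under the reductions. Deleting a part is free only when no pair of edges meeting in exactly $t$ vertices agrees on it, so the difficulty concentrates on configurations in which every part is ``critical'' for some such tight pair; ruling these out, or extracting a smaller $t'$-intersecting hypergraph from them after committing a bounded number of vertices to the cover, is where the real work lies. The secondary challenge is bookkeeping but unforgiving: the constants must be tracked exactly across iterated reductions so that the accumulated cover size meets the stated thresholds with no slack, which is what forces the case distinction rather than a single uniform bound.
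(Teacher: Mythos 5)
Your central two-edge inequality is correct, and it is in fact the engine of the paper's own argument (Observation~\ref{obs:cover} and Lemma~\ref{lem:2edges}); your ``free'' deletion of a part when $m \ge t+1$ is also fine (the paper phrases it equivalently: such an $\m H$ is an $(r,t+1)$-graph). But you stop short of using your inequality where it matters. With $m = t$ and $r \le 3t-1$ it already finishes the first case directly, with no recursion: every edge meets $A$ in at least $\ceil*{(3t-r)/2} \ge 1$ vertices, so \emph{any} subset $C \subseteq A$ of size $t - \ceil*{(3t-r)/2} + 1 = \floor*{(r-t)/2}+1$ is a cover, since an edge avoiding $C$ could meet $A$ in at most $\ceil*{(3t-r)/2} - 1$ vertices. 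Instead you route the tight case $m=t$ through the recursion $\Ryser(r,t) \le 1 + \Ryser(r-2,t)$, whose key step --- commit one cover vertex and delete two parts to land on an $(r-2,t)$-graph --- is never justified and is exactly where the approach would fail: after deleting two parts, any pair of surviving edges whose intersection has size exactly $t$ and meets a deleted part is no longer $t$-intersecting, and you supply no mechanism for choosing the vertex and the two parts so that this never happens (you flag this yourself as ``where the real work lies,'' which is an admission that the step is a placeholder, not a proof). So case 1 as written is incomplete, although your own inequality repairs it.

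The gap in cases 2--4 is more serious and is not repairable within your framework. You propose iterating unspecified reductions $\Ryser(r,t) \le c + \Ryser(r-a,t-b)$, but you exhibit no admissible triple $(a,b,c)$, no proof that any such reduction preserves the intersecting structure (the same part-deletion problem as above, now in the regime where edges can intersect in exactly $t$ vertices spread over arbitrary parts), and no derivation of the thresholds; the assertion that the constants $9/8$, $15/8$, $26t/7$ and $(52t-13)/9$ arise from ``optimising reduction steps'' is speculation. The paper does not prove these cases by reduction at all. The two-edge inequality with $t'=t$ is precisely what yields $2r-5t+2$ (case 2), and to go beyond it the paper assumes for contradiction that $\tau(\m H) \ge x+z+1$ for carefully chosen $x,z$, builds a non-covering set $Y$ of $x+z$ vertices inside two minimally intersecting edges $e_1,e_2$, extracts an edge $e_3$ disjoint from $Y$, and then runs a three-edge degree-sum analysis (Lemma~\ref{lem:3edges}, combined with Corollary~\ref{cor:2edgeagain}, which constrains all pairwise intersections in a hypergraph of large cover number) to conclude $\tau(\m H) \le x+z$, a contradiction; the fractional constants come from optimising $x$ and $z$ in Proposition~\ref{prop:3edgeupperbound}. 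Your proposal contains no analysis of configurations of three or more edges, so the third and fourth bounds are unsupported: the missing idea is precisely the three-edge lemma.
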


In particular, the first case of the theorem gives the upper bound needed for Theorem~\ref{thm:exactvalue}. To visualise our results, it helps to focus on the asymptotics when $t$ is linear in $r$. To this end,  we define the function $f(\alpha) := \lim\limits_{r \rightarrow \infty} \frac{\Ryser(r,\alpha r)}{r}$. Theorem~\ref{thm:upperbound} can then be seen as a piecewise linear upper bound on $f(\alpha)$. 
Figure~\ref{fig:asymptotics} summarises our knowledge of $f(\alpha)$: we know it exactly for $\alpha \ge \frac13$, while we can still strongly restrict $f(\alpha)$ for smaller values of $\alpha$.

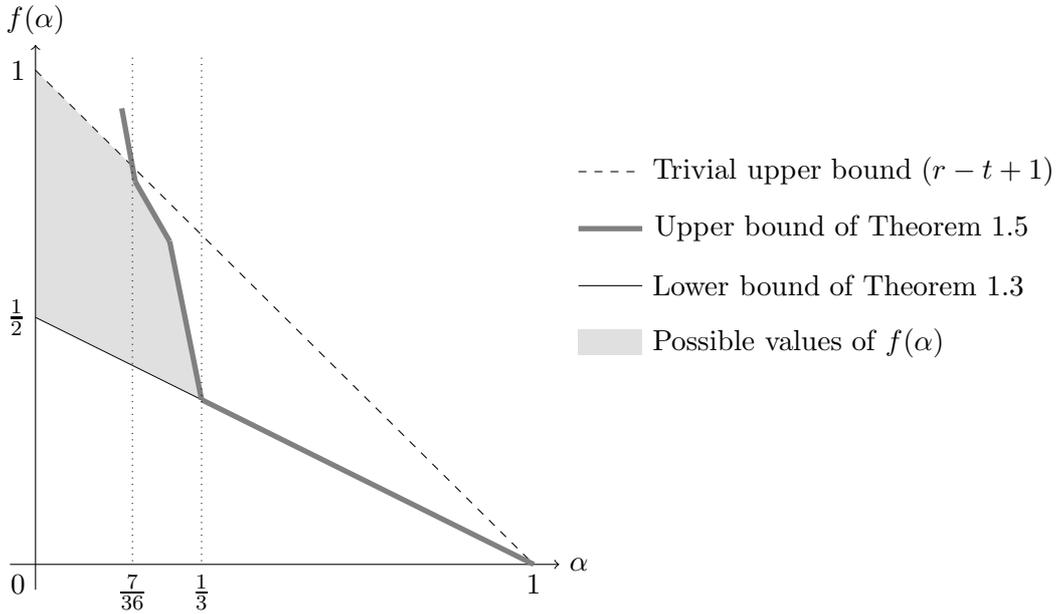
\begin{figure}[h] 
    \centering
    
    \begin{tikzpicture}[scale=0.084]
    
        \draw[->] (-4,0) -- (82,0) node[right] {$\alpha$};
        \draw[->] (0,-4) -- (0,82) node[above] {$f(\alpha)$};
        
        \draw[domain=0:78,dashed,variable=\x] plot ({\x},{78-\x});
        
        \draw[domain=0:78,smooth,variable=\x] plot ({\x},{39-0.5*\x});
        
        \draw[domain=26:78,smooth,line width=2pt,variable=\x,gray] plot ({\x},{39-0.5*\x});
        \draw[domain=21:26,smooth,line width=2pt,variable=\x,gray] plot ({\x},{156-5*\x});
        \draw[domain=15.6:21,smooth,line width=2pt,variable=\x,gray] plot ({\x},{87.75-1.75*\x});
        \draw[domain=13.5:15.6,smooth,line width=2pt,variable=\x,gray] plot ({\x},{146.25-5.5*\x});
        
        \fill[gray, opacity=0.24] (0,39) -- (26,26) -- (21,51) -- (15.6,60.45) -- (15.17,62.83) -- (0,78) -- cycle; 
        
        \draw[dotted] (15.17,80) -- (15.17,0) node[below] {$\frac{7}{36}$};
        
        \draw[dotted] (26,80) -- (26,0) node[below] {$\frac13$};
        
        \node[below] (xint) at (78,0) {$1$};
        \node[below left] (origin) at (0,0) {$0$};
        \node[left] (ylower) at (0,39) {$\frac12$};
        \node[left] (yupper) at (0,78) {$1$};
        
        \draw[dashed] (85,62) -- (95,62) node[right] {Trivial upper bound ($r-t+1$)};
        \draw[line width=2pt, gray] (85,53) -- (95,53) node[right,black] {Upper bound of Theorem~\ref{thm:upperbound}};
        \draw (85,44) -- (95,44) node[right] {Lower bound of Theorem~\ref{thm:lowerbound}};
        \fill[gray,opacity=0.24] (85,37) -- (95,37) -- (95,33) -- (85,33) -- cycle;
        \node[right] (shade) at (95,35) {Possible values of $f(\alpha)$};
    \end{tikzpicture}
    
    \caption{The asymptotics of $\Ryser(r,\alpha r)$.}
    \label{fig:asymptotics}
\end{figure}

In particular, we extend the results of Bustamante and Stein~\cite{bustastein} and Kir\'aly and T\'othm\'er\'esz~\cite{kirtot} by showing Conjecture~\ref{conj:weak} continues to hold for smaller values of $t$, and that it is not tight in most of these cases.

\begin{cor} \label{cor:comparisons}
Conjecture~\ref{conj:weak} holds for all but eight pairs\footnote{The  exceptional pairs, for which Conjecture~\ref{conj:weak} remains open, are $(12,3), (13,3), (16,4), (17,4), (18,4), (22,5), (23,5)$ and $(28,6)$.} $(r,t)$ satisfying $r \le \frac{36t - 17}{7}$.  Furthermore, the conjecture is not tight (that is, $\Ryser(r,t) \le r - t - 1$) for all but $50$ pairs where $t+3 \leq r \le \frac{36t - 25}{7}$.
\end{cor}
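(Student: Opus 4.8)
The plan is to deduce Corollary~\ref{cor:comparisons} directly from Theorem~\ref{thm:upperbound}, supplemented by the known result of Kir\'{a}ly and T\'{o}thm\'{e}r\'{e}sz that Conjecture~\ref{conj:weak} holds for $r \le 4t-1$. For the first assertion I want to translate each of the four upper bounds into the inequality $\Ryser(r,t)\le r-t$, and for the second into its strict version $\Ryser(r,t)\le r-t-1$. Writing $d=r-t$, the first case of Theorem~\ref{thm:upperbound} gives $\floor{d/2}+1$, which is at most $d$ for every $d\ge 1$ and at most $d-1$ exactly when $d\ge 3$; this already settles both parts of the corollary throughout $t+1\le r\le 3t-1$, and in particular explains the lower endpoint $r\ge t+3$ appearing in the non-tightness statement.

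Next I would dispose of the three remaining cases by solving the corresponding linear inequalities, taking care of the floors. For the conjecture, case~4 (valid for $5t-1\le r\le (52t-13)/9$) satisfies $\floor{(15r-44t)/8}+3\le r-t$ precisely when $7r\le 36t-17$, which is the source of the stated range $r\le (36t-17)/7$; the identical computation with $r-t-1$ in place of $r-t$ gives $7r\le 36t-25$, hence the range $r\le (36t-25)/7$ for non-tightness. The analogous computations for cases~2 and~3 show that their bounds yield the conjecture for $r\le 4t-2$ and $r\le 6t-9$ respectively, with the strict versions shifting each threshold down by a bounded amount. Combining these sub-ranges with the prior result for $r\le 4t-1$, the four bounds cover all but finitely many lattice points below the line $r=(36t-17)/7$.

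It then remains to pin down the pairs $(r,t)$ that slip through, and I would enumerate them directly. Since $4t$ never lies in case~2's range and case~4 always succeeds within $r\le (36t-17)/7$, every exception must be an integer point sitting in (the upper part of) case~3's range where that bound exceeds $r-t$. Because the top of case~3's range is $5t-2$ while its conjecture threshold is $6t-9$, a gap opens precisely when $5t-2>6t-9$, i.e.\ for $t\le 6$; for $t\ge 7$ case~3 covers its whole range and no exception occurs. A finite check over $t\in\{3,4,5,6\}$ then produces exactly the eight listed pairs $(12,3),(13,3),(16,4),(17,4),(18,4),(22,5),(23,5),(28,6)$, and the same bookkeeping for the strict inequality yields the fifty pairs in the non-tightness statement. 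Tellingly, at every exceptional point case~4's formula would already give $\Ryser(r,t)\le r-t$; the only reason the conjecture is not established there is that case~4's validity begins at $r=5t-1$, just above these points.

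The main obstacle will be the careful bookkeeping forced by the floor functions. Each threshold above is only approximate until the floors are resolved, and it is exactly the off-by-one behaviour at the boundaries of the four validity ranges—most acutely for small $t$, where the ranges are short—that produces a finite list of exceptions rather than a clean linear cutoff. One must also verify that the four ranges abut or overlap correctly and take the pointwise minimum of the available bounds, so that no lattice point below $r=(36t-17)/7$ is inadvertently overlooked before the genuine exceptions are isolated.
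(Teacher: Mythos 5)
Your approach coincides with the paper's: the paper likewise derives this corollary by comparing each of the four bounds of Theorem~\ref{thm:upperbound} against $r-t+1$ (respectively $r-t$) using integrality of the cover number, invoking the Kir\'aly--T\'othm\'er\'esz bound for $r \le 4t-1$ where Theorem~\ref{thm:upperbound} is too weak, and checking a finite number of small cases. All of your explicit computations for the first assertion are correct: case~2 yields the conjecture iff $r \le 4t-2$ (so, together with \cite{kirtot}, its range never produces exceptions), case~3 iff $r \le 6t-9$, case~4 iff $7r \le 36t-17$, the gap in case~3's range $[26t/7,\,5t-2]$ opens exactly for $t \le 6$, and the resulting enumeration is precisely the eight listed pairs.

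There is, however, one step that would fail if executed as literally written: the non-tightness assertion is \emph{not} ``the same bookkeeping'' over $t \in \{3,4,5,6\}$. The result of \cite{kirtot} gives only $\tau \le r-t$, never $r-t-1$, so it cannot be used to close the gap in case~2's range; consequently, for the strict inequality, case~2 does produce genuine exceptions, e.g.\ $(6,2)$, $(10,3)$, $(11,3)$, $(14,4)$, $(18,5)$, $(22,6)$, $(26,7)$. Moreover, the strict threshold for case~3 is $r \le 6t-17$ (the floor shifts it down by $8$, not by $1$), so the gap $6t-17 < 5t-2$ persists for all $t \le 14$, and the finite check must run over $2 \le t \le 14$ rather than $t \le 6$. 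Carrying this out (case~2's range contributes for $t \le 7$, case~3's for $t \le 14$, case~4 never does within $r \le \frac{36t-25}{7}$) gives exception counts $1,2,4,5,6,7,6,5,4,4,3,2,1$ for $t=2,\dots,14$, summing to exactly $50$; restricting the check to $t \le 6$ as in your first-part analysis would yield only $18$ pairs and thus contradict the stated count.
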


The value of $\frac{36t-17}{7}$ comes from comparing when the upper bound on $\Ryser(r,t)$ in the fourth range of Theorem~\ref{thm:upperbound} falls strictly below $r-t+1$, using the fact that the cover number must be an integer. This comparison describes exactly when our bounds become trivial for all large values of $r$ and $t$. However, for  small values of $r$ and $t$, we may fall in other ranges and so have to check a finite number of cases for exceptions. In doing so, we note that for certain values of $(r,t)$ we must appeal to the previously mentioned result of Kir\'aly and T\'othm\'er\'esz~\cite{kirtot} who proved the conjecture  whenever $r\leq 4t-1$.  The range and exceptions for when the conjecture is not tight are also calculated in a similar manner, using Theorem~\ref{thm:upperbound}. 

These results beg the question of what the true value of $\Ryser(r,t)$ should be; we discuss this further in Section~\ref{sec:variants}, and propose a new conjecture in Conjecture~\ref{conj:strong}.

\subsection{$k$-wise intersecting hypergraphs}

In the above results, we require that all pairwise intersections of the edges of the hypergraphs have size at least $t$. A natural stronger condition is to impose the same restriction on all $k$-wise intersections of edges, rather than just pairwise. This setting has often been studied in the extremal combinatorics literature. Frankl~\cite{frankl76} first studied such hypergraphs, determining the maximum number of edges possible when all $k$-wise intersections are non-empty. S\'os~\cite{sos73} then raised the problem of finding the largest hypergraphs where the sizes of all $k$-wise intersections lie in some set $L$, and various results in this direction were obtained by F\"uredi~\cite{furedi83}, Vu~\cite{vu97, vu99}, Grolmusz~\cite{grolmusz02}, Grolmusz and Sudakov~\cite{grolsuda02}, F\"uredi and Sudakov~\cite{furesuda04} and Szab\'o and Vu~\cite{szabovu05}.

We say a hypergraph $\m H$ is $k$-wise $t$-intersecting if, for any edges $e_1, e_2, \hdots, e_k \in E(\m H)$, we have $\card{\cap_{i=1}^k e_i} \ge t$. Following Ryser's Conjecture, we study how much smaller a cover one is guaranteed to find in $r$-uniform $r$-partite hypergraphs satisfying the more restrictive condition of being $k$-wise $t$-intersecting. In a stroke of serendipity, the range of intersection sizes for which Theorem~\ref{thm:exactvalue} holds is precisely what is needed to give an exact answer in this setting.

\begin{thm} \label{thm:kwise}
Let $\m H$ be an $r$-uniform $r$-partite $k$-wise $t$-intersecting hypergraph.  If $k \ge 3$ and $t \ge 1$, or $k = 2$ and $t > \tfrac{r}{3}$, then
\[ \tau(\m H) \le \left\lfloor \frac{r-t}{k} \right\rfloor + 1, \]
and this bound is best possible.
\end{thm}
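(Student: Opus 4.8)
The plan is to isolate the combinatorial core of the statement as a clean lemma about abstract set families and then transfer it to the partite setting. First I would record the cheap structural facts: a $k$-wise $t$-intersecting family is automatically $j$-wise $t$-intersecting for every $2 \le j \le k$ (pad any $j$ chosen edges with repetitions), so in particular $\cH$ is intersecting and $\nu(\cH)=1$. I would also observe that the target bound is invariant under deleting a part. This gives a clean first reduction to the case $t=1$: deleting $t-1$ parts of $\cH$ (and the corresponding coordinate from every edge) produces an $(r-t+1)$-uniform $(r-t+1)$-partite hypergraph that is still $k$-wise $1$-intersecting, and any cover of it lifts to a cover of $\cH$; since $\lfloor((r-t+1)-1)/k\rfloor+1=\lfloor(r-t)/k\rfloor+1$, it suffices to treat hypergraphs in which every $k$ edges share a common vertex.

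The engine I would prove is a statement for an \emph{arbitrary} family $\cF$ of subsets of an $n$-element ground set: if $\cF$ is $\ell$-wise $t$-intersecting, then it has a transversal of size at most $\lfloor(n-t)/\ell\rfloor+1$. The argument is a contradiction via complementary hitting sets. If no set of $c:=\lfloor(n-t)/\ell\rfloor+1$ points meets every member, then for each $c$-set $H$ some member lies in $\overline{H}$; choosing $\ell$ such sets $H_1,\dots,H_\ell$ with $\card{\bigcup_i H_i}\ge n-t+1$ (possible precisely because $\ell c\ge n-t+1$) yields members $F_i\subseteq \overline{H_i}$ with $\card{\bigcap_i F_i}\le \card{\overline{\bigcup_i H_i}}\le t-1$, contradicting $\ell$-wise $t$-intersection. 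This bound is sharp, witnessed by the family of all subsets of size at least $n-\lfloor(n-t)/\ell\rfloor$; a uniform, partite realisation of this extremal family, in the spirit of the construction behind Theorem~\ref{thm:lowerbound}, is what I would use to certify that the bound in the theorem is best possible. The case $k=2$ of the theorem is then exactly Theorem~\ref{thm:exactvalue}, and its validity range $t>r/3$ is precisely the hypothesis imposed there — the serendipity alluded to above.

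With the engine in hand, the goal is to apply it with $\ell=k$ and a ground set of size $r$, and here lies the main obstacle: $\cH$ lives on a vertex set that may be far larger than $r$, so the complementary-hitting-set argument cannot be run verbatim, since its $k$ hitting sets can only ever cover $O(kc)$ vertices rather than the whole ground set. The partite structure is exactly what must repair this. The natural attempt is to fix a reference transversal $\theta$ (one vertex per part) and pass to the trace family $\cA_\theta=\{\,\pi(f\cap\theta):f\in E(\cH)\,\}\subseteq 2^{[r]}$ of agreement patterns, for which covers of $\cH$ correspond to transversals and the ground set has the correct size $r$. The delicate point — and the crux of the whole proof — is that taking $\theta$ to be an edge makes $\cA_\theta$ only $(k-1)$-wise $t$-intersecting, which loses a factor and yields the weaker bound $\lfloor(r-t)/(k-1)\rfloor+1$; recovering the full division by $k$ forces one to run the argument with all $k$ edges live, i.e. without spending one edge on the coordinate frame.

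I expect this transfer to be the hard part, and my plan to resolve it is to work directly in the reduced case $t=1$. Assuming $\tau(\cH)\ge \lfloor(r-1)/k\rfloor+2$, the aim is to produce $k$ edges with empty common intersection, contradicting the hypothesis that every $k$ edges meet. The difficulty is to organise the choice of these edges so that they carve the $r-1$ relevant coordinates into $k$ blocks rather than $k-1$: a greedy chain that intersects edges one at a time naturally produces only $k-1$ productive "drops," because the first edge merely fixes the frame, and one must use the partite structure more cleverly — for instance by controlling, part by part, how many distinct vertices the uncovered edges can use, so as to keep all $k$ edges contributing. I would complement this with the degenerate alternative in which no edge removes a full block from the running common intersection, where a handful of vertices of that intersection already form a small cover. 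Making these two cases meet exactly at the threshold $\lfloor(r-1)/k\rfloor+1$, in harmony with the sharp abstract lemma, is where I anticipate the real work to lie.
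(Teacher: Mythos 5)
Your proposal has a genuine gap, and it sits exactly where you say you ``anticipate the real work to lie'': the transfer of your complementary-hitting-set lemma from a ground set of size $r$ to a genuinely partite hypergraph is not a deferred technicality, it is the entire content of the theorem, and nothing in your sketch resolves it. Your engine lemma is correct on small ground sets (and its extremal family is indeed the one realised partitely by the paper's construction $\cH^r_\ell$ with $\ell = \floor{(r-t)/k}$, so the sharpness half of your plan is sound), but once the vertex set is larger than $r$, the $k$ hitting sets can no longer exhaust the ground set, and your fallback --- trace families along a fixed edge --- provably loses a level: restricting to an edge turns $k$-wise $t$-intersecting into only $(k-1)$-wise $t$-intersecting, giving the weaker bound $\floor{(r-t)/(k-1)}+1$, as you yourself note. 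Your proposed repair (``greedy chains'', ``controlling, part by part, how many distinct vertices the uncovered edges can use'', a ``degenerate alternative'') is a description of the difficulty, not an argument; no step of it can be checked as written.

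The missing idea is the dichotomy on which the paper's induction on $k$ is built. Given $k-1$ edges with $U=\bigcap_{i=1}^{k-1}e_i$, the $k$-wise $t$-intersecting condition forces \emph{every} edge to meet $U$ in at least $t$ vertices; hence if some choice of $k-1$ edges gives $\card{U} \le \floor{(r-t)/k} + t$, then $U$ with any $t-1$ vertices removed is already a cover of size at most $\floor{(r-t)/k}+1$, and otherwise $\cH$ is $(k-1)$-wise $t'$-intersecting with the \emph{boosted} parameter $t' = \floor{(r-t)/k} + t + 1$, so induction on $k$ applies and a short calculation closes the bound. Crucially, $t$ grows as $k$ shrinks, so the recursion bottoms out at $k=2$ with $t' > r/3$, exactly where the first case of Theorem~\ref{thm:upperbound} is available. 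Your ``degenerate alternative'' gestures at the first horn of this dichotomy, but you never formulate the second horn, which is what keeps all $k$ edges contributing. Worse, your opening reduction to $t=1$ forfeits precisely this leverage: the bound is \emph{false} for $k=2$, $t=1$ (truncated projective planes have $\tau = r-1$, far above $\floor{(r-1)/2}+1$), so your reduced problem has no $k=2$ base case to recurse to, and proving the $k\ge 3$, $t=1$ case directly is Kir\'aly's theorem --- no easier than the general statement. Keeping $t$ in play, rather than normalising it away, is what makes the induction work.
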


\paragraph{Organisation of the paper.}
We prove the above theorems in the following section: the lower bound of Theorem~\ref{thm:lowerbound} is derived in Section~\ref{sec:lowerboundproof}, the upper bounds of Theorem~\ref{thm:upperbound} are proven in Section~\ref{sec:upperboundproof}, and Theorem~\ref{thm:kwise} is deduced in Section~\ref{sec:kwise}. Thereafter we suggest several directions for further research in Section~\ref{sec:variants} by presenting initial results on variants of the problem where we require the hypergraphs to be strictly $t$-intersecting or we try to cover each edge of an $(r,t)$-graph at least $s$ times.

\section{Proofs of the main results}

In this section we prove our main results, Theorems~\ref{thm:lowerbound},~\ref{thm:upperbound} and~\ref{thm:kwise}, by establishing lower (Section~\ref{sec:lowerboundproof}) and upper (Section~\ref{sec:upperboundproof}) bounds on the extremal function $\Ryser(r,t)$, and then extending them to $k$-wise $t$-intersecting hypergraphs (Section~\ref{sec:kwise}).

\subsection{Lower bound construction} \label{sec:lowerboundproof}

To obtain the lower bound, and thereby prove Theorem~\ref{thm:lowerbound}, we need to construct $(r,t)$-graphs with large cover numbers. The hypergraphs we consider are of the following form.

\begin{dfn} \label{def:1levelconstruction}
For $0\leq\ell \leq r-1$, we define $\cH^r_\ell$ to be the following $r$-uniform $r$-partite  hypergraph. Let $m=\binom{r}{r-\ell}$ and fix some ordering $\binom{[r]}{r-\ell}=\{S_1,\ldots,S_m\}$ of the $(r-\ell)$-subsets of $[r]$. We define  
\[V(\cH^r_\ell):= \{0,1,\ldots,m\} \times [r] \quad \mbox{ and } \quad E(\cH^r_\ell):=\{e_i: i \in[m]\},\]
where $e_i=\{(0,j):j\in S_i\}\cup \{(i,j):j\in [r]\setminus S_i\}$, for each $i\in[m]$.
\end{dfn}

Note that $\cH^r_\ell$ is indeed  $r$-partite with  parts $P_j=\{(i,j):0\leq i \leq m \}$ for $1\leq j\leq r$. We now show that choosing $\ell$ appropriately gives a construction verifying Theorem \ref{thm:lowerbound}. 

\begin{prop} \label{prop:lowerboundcalcs} 
For $0 \le \ell \le \floor{\frac{r-1}{2}}$, $\m H^r_{\ell}$ is $(r - 2 \ell)$-intersecting with $\tau(\m H^r_{\ell}) = \ell + 1$.
\end{prop}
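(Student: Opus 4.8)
The plan is to verify separately the intersection property and the two matching bounds on the cover number, in each case exploiting the ``two-row'' structure of the edges of $\m H^r_\ell$: every edge $e_i$ uses the shared row $0$ on the coordinates in $S_i$ and its own private row $i$ on the coordinates outside $S_i$.

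For the intersection property I would first observe that, for distinct $i,i' \in [m]$, the edge $e_{i'}$ uses only vertices with first coordinate $0$ or $i'$, whereas the private vertices of $e_i$ all carry first coordinate $i \notin \{0,i'\}$. Hence the two edges can meet only in row $0$, giving $e_i \cap e_{i'} = \{(0,j) : j \in S_i \cap S_{i'}\}$ and so $|e_i \cap e_{i'}| = |S_i \cap S_{i'}|$. Since $|S_i| = |S_{i'}| = r-\ell$, inclusion--exclusion yields $|S_i \cap S_{i'}| \ge 2(r-\ell) - r = r - 2\ell$, so $\m H^r_\ell$ is $(r-2\ell)$-intersecting. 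For the upper bound $\tau(\m H^r_\ell) \le \ell+1$ I would fix any $T \subseteq [r]$ with $|T| = \ell+1$ and check that $C = \{(0,j) : j \in T\}$ is a cover: each edge meets $C$ in $\{(0,j): j \in S_i \cap T\}$, and since $|S_i| + |T| = (r-\ell)+(\ell+1) = r+1 > r$, the pigeonhole principle forces $S_i \cap T \ne \emptyset$.

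The heart of the argument is the lower bound $\tau(\m H^r_\ell) \ge \ell+1$, which I would prove by contradiction: suppose $C$ is a cover with $|C| \le \ell$. The decisive structural observation is that, for $i \ge 1$, a row-$i$ vertex $(i,\cdot)$ lies in no edge other than $e_i$, so every vertex of $C$ outside row $0$ covers at most one edge. Writing $T = \{j : (0,j) \in C\}$ with $a = |T|$ and letting $b = |C| - a$ count the remaining private vertices, the edges left uncovered by the row-$0$ part of $C$ are exactly those $e_i$ with $S_i \subseteq [r]\setminus T$, of which there are $\binom{r-a}{r-\ell}$. Setting $k = \ell - a \ge b$, this equals $\binom{(r-\ell)+k}{k}$, and since $\ell \le \floor{(r-1)/2}$ forces $r-\ell \ge 1$, monotonicity of the binomial coefficient in its top argument gives $\binom{(r-\ell)+k}{k} \ge \binom{k+1}{k} = k+1 > k \ge b$. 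Thus the private vertices are strictly too few to cover the edges missed by $T$, so some edge avoids $C$ altogether, the desired contradiction.

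I expect the lower bound to be the main obstacle, and within it the crucial ideas are the \emph{privacy} of the lower rows---which caps each such vertex's covering power at a single edge---together with the clean count $\binom{r-a}{r-\ell}$ of edges avoided by a fixed set $T$ of row-$0$ coordinates. The concluding inequality $\binom{(r-\ell)+k}{k} > k$ is then a short monotonicity check that uses exactly the hypothesis $\ell \le \floor{(r-1)/2}$; all remaining steps are routine.
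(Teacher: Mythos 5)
Your proposal is correct, and correctly identifies the decisive structural fact (each vertex $(i,j)$ with $i\ge 1$ lies in at most one edge), but your lower-bound argument is executed differently from the paper's. The paper first \emph{normalizes} the cover: if $C$ contains a private vertex $v=(i,j)$ with $i\ge 1$, it covers only $e_i$, so $v$ can be swapped for any vertex of $e_i\cap L_0$ without increasing $|C|$; this reduces to the case $C\subseteq L_0$, whereupon $|L_0\setminus C|\ge r-\ell$ guarantees some $S_{i^*}$ with $\{0\}\times S_{i^*}\subseteq L_0\setminus C$, i.e.\ an uncovered edge, by pigeonhole alone. You instead keep the private vertices and run a direct count: the row-$0$ part of $C$ misses exactly $\binom{r-a}{r-\ell}$ edges, and the monotonicity bound $\binom{(r-\ell)+k}{k}\ge k+1 > b$ shows the $b$ private vertices (covering at most one edge each) cannot absorb them all. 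Both arguments are sound; the paper's exchange step is a little lighter, avoiding binomial coefficients entirely, while your counting version has the minor virtue of quantifying \emph{how many} edges must escape any small cover (at least $\binom{r-a}{r-\ell}-b$ of them), rather than merely exhibiting one. Your treatments of the intersection property and of the upper bound $\tau(\m H^r_\ell)\le \ell+1$ coincide with the paper's, with the small difference that you pin down the intersection exactly as $e_i\cap e_{i'}=\{(0,j):j\in S_i\cap S_{i'}\}$, whereas the paper only needs the one-sided estimate $|e_i\cap e_{i'}|\ge |e_i\cap e_{i'}\cap L_0|\ge r-2\ell$.
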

\begin{proof}
To see that $\m H^r_{\ell}$ is $(r-2\ell)$-intersecting, observe that each edge of $\m H^r_{\ell}$ misses exactly $\ell$ vertices from the set $L_0 = \{(0,j) : j \in [r]\}$. It then follows that, for any two edges $e_i, e_{i'} \in E(\m H^r_{\ell})$, we have
\[ \card{e_i\cap e_{i'}} \geq \card{e_i\cap e_{i'}\cap L_0} \geq \card{L_0} - 2\ell = r-2\ell. \]

We now establish the cover number of $\cH_\ell^r$. Assume for a contradiction that $\cH^r_\ell$ has a cover $C\subset V(\cH^r_\ell)$ of size $c\leq \ell$. First we show that we may assume that $C\subset L_0$. Indeed, if $v = (i,j) \in C$ for some $i \ge 1$, then the only edge that could contain $v$ is $e_i$.  If we replace $v$ with any  vertex in $e_i\cap L_0$, the modified set $C$ has not increased in size and still covers $\m H^r_\ell$.

Since $|C|=c\leq \ell$, we have $|L_0\setminus C|\geq r-\ell$ and hence there exists an $i^*\in[m]$ such that $\{0\}\times S_{i^*}\subseteq L_0\setminus C$. Then $e_{i^*}\cap C=\emptyset$, contradicting the fact that $C$ is a cover. Thus $\tau(\cH^r_\ell)\geq \ell+1$. To see we have equality, note that any subset of $\ell+1$ vertices in $L_0$ forms a cover. 
\end{proof}

The key property needed in the above proof is that, for each subset $S \subseteq L_0$ of size $r-\ell$, there is an edge of $\m H^r_\ell$ intersecting $L_0$ exactly at the vertices of $S$. Our construction is edge-minimal with respect to this key property, and further ensures that all vertices not in $L_0$ have degree at most one, leading to an easy proof of the cover number. 

However, as long as the key property is maintained, there is great flexibility in how the rest of the hypergraph is constructed.  For instance, one can instead make it vertex-minimal, having parts of size $\ell + 1$ rather than $\binom{r}{r-\ell} + 1$, so that each part $P_j$ also forms a minimum vertex cover. We omit the details of this construction for the sake of brevity, as we already have all we need to prove Theorem~\ref{thm:lowerbound}.

\begin{proof}[Proof of Theorem~\ref{thm:lowerbound}]
Set $\ell = \floor{\frac{r-t}{2}}$. By Proposition~\ref{prop:lowerboundcalcs}, $\m H^r_{\ell}$ is $(r - 2\ell)$-intersecting, and as $r - 2\ell \ge t$, it follows that $\m H^r_{\ell}$ is an $(r,t)$-graph.  The proposition further asserts that $\tau(\m H^r_{\ell}) = \ell + 1$, and thus
\[ \Ryser(r,t) \ge \tau(\m H^r_{\ell}) = \floor*{\frac{r-t}{2}} + 1. \qedhere \]
\end{proof}

\subsection{Upper bounds} \label{sec:upperboundproof}
In this section we prove Theorem~\ref{thm:upperbound}. 
These upper bounds are derived from a sequence of results obtained by considering configurations of two or three edges of the hypergraph. Before proceeding, we fix some notation that will be useful in what follows. 

\begin{dfn} \label{def:degreesums}
Let $\cH$ be an $r$-uniform $r$-partite hypergraph with parts $P_j$ for $1\leq j\leq r$. Suppose that $e_1,\ldots,e_k\in E(\cH)$. Then for   $v\in V(\cH)$, we define
\[d(v;e_1,\ldots,e_k)=\card{\{i\in[k]:v\in e_i\}}\]
to be the degree of $v$ with respect to the $k$ edges $e_1,\ldots , e_k$. Also, given a vertex subset $C\subseteq V(\cH)$ not wholly containing any part $P_j$, we define 
\[\Delta_\cH(C;e_1,\ldots,e_k) = \sum_{j=1}^r \max_{v\in P_j\setminus C}d(v;e_1,\ldots,e_k)\]
to be the maximum sum of degrees (with respect to $e_1,\ldots,e_k$) when we take one vertex from each part and avoid $C$.
\end{dfn}

The utility of this definition comes from the following easy observation, which we use repeatedly in the subsequent proofs.
\begin{obs} \label{obs:cover} Suppose that $\cH$ is an $r$-uniform $r$-partite hypergraph, $C\subset V(\cH)$ and $e_1,\ldots,e_k\in E(\cH)$ are edges of $\cH$. Now if $f\in E(\cH)$ and $f\cap C=\emptyset$, we have 
\begin{equation} \label{eq:keyobs}
    \sum_{i=1}^k|f\cap e_i|\leq \Delta_\cH(C;e_1,\ldots,e_k).
\end{equation}
Consequently, if $\cH$ is $t$-intersecting 
 and $\Delta_\cH(C;e_1,\ldots,e_k)\leq kt-1$, then $C$ is a cover for $\cH$. Indeed, there can be no $f\in E(\cH)$ disjoint from $C$, as by~\eqref{eq:keyobs} and the pigeonhole principle, there would be some $i\in[k]$ for which $\card{e_i\cap f} \leq t-1$, contradicting $\cH$ being $t$-intersecting.
\end{obs}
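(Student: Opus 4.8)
The plan is to establish the displayed inequality~\eqref{eq:keyobs} by a double-counting argument and then to derive the covering consequence via the pigeonhole principle.

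First I would count, in two ways, the set of pairs $(v,i)$ with $i\in[k]$ and $v\in f\cap e_i$. Summing over $i$ first gives $\sum_{i=1}^k|f\cap e_i|$, whereas summing over $v\in f$ first gives $\sum_{v\in f}d(v;e_1,\ldots,e_k)$, since for a fixed $v$ the number of indices $i$ with $v\in e_i$ is exactly $d(v;e_1,\ldots,e_k)$. Hence
\[\sum_{i=1}^k|f\cap e_i|=\sum_{v\in f}d(v;e_1,\ldots,e_k),\]
which reduces the task to bounding the total degree of the vertices of $f$.

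Next I would invoke $r$-partiteness. As $\cH$ is $r$-uniform and $r$-partite, the edge $f$ meets each part $P_j$ in exactly one vertex, say $v_j\in P_j$, so that $f=\{v_1,\ldots,v_r\}$. The hypothesis $f\cap C=\emptyset$ then guarantees $v_j\in P_j\setminus C$ for every $j$; in particular each $P_j\setminus C$ is nonempty, so the maxima defining $\Delta_\cH(C;e_1,\ldots,e_k)$ are well-defined. Bounding each $d(v_j;e_1,\ldots,e_k)$ above by the maximum degree over $P_j\setminus C$ and summing over the $r$ parts yields
\[\sum_{v\in f}d(v;e_1,\ldots,e_k)=\sum_{j=1}^r d(v_j;e_1,\ldots,e_k)\leq\sum_{j=1}^r\max_{v\in P_j\setminus C}d(v;e_1,\ldots,e_k),\]
and the right-hand side is precisely $\Delta_\cH(C;e_1,\ldots,e_k)$, which proves~\eqref{eq:keyobs}.

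For the covering consequence I would argue by contradiction. If $C$ were not a cover, some edge $f$ would satisfy $f\cap C=\emptyset$, and then~\eqref{eq:keyobs} together with the assumption $\Delta_\cH(C;e_1,\ldots,e_k)\leq kt-1$ forces $\sum_{i=1}^k|f\cap e_i|\leq kt-1$. A sum of $k$ integers bounded by $kt-1$ cannot have every summand at least $t$, so some $|f\cap e_i|\leq t-1$, contradicting that $\cH$ is $t$-intersecting. I do not expect any genuine obstacle: the argument is an interchange of summation followed by pigeonhole. The only points demanding a little care are that $f$, being an edge of an $r$-partite $r$-uniform hypergraph, contributes exactly one vertex to each part—so the passage to $\Delta_\cH$ accounts for all $r$ parts rather than a subset—and that the maxima in the definition of $\Delta_\cH$ are well-defined precisely because the existence of such an $f$ forces every $P_j\setminus C$ to be nonempty.
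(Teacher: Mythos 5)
Your proposal is correct and matches the paper's (largely implicit) argument: the inequality~\eqref{eq:keyobs} follows exactly as you say, because $f$ is a transversal of the parts avoiding $C$, so its degree sum is bounded termwise by the maxima defining $\Delta_\cH(C;e_1,\ldots,e_k)$, and the covering consequence is the same pigeonhole contradiction the paper gives. Your added remark that the existence of an edge $f$ disjoint from $C$ guarantees each $P_j\setminus C$ is nonempty (so the maxima are well-defined) is a careful touch consistent with the paper's standing assumption in Definition~\ref{def:degreesums}.
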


Armed with this observation, we can prove upper bounds on the cover numbers of $(r,t)$-graphs. In the following lemma, we begin by constructing a cover consisting of vertices lying in two edges of such a hypergraph. 

\begin{lem} \label{lem:2edges}
Let $\cH$ be an $(r,t)$-graph, let $e_1,e_2\in E(\cH)$, and set $t' = |e_1\cap e_2| \geq t$. Then 
\[\tau(\cH)\leq \begin{cases}
    \floor*{\frac{r-t'}{2}} + t'-t+1 & \mbox{ if } r-2t+1 \leq t'\leq r, \\
    2r-4t-t'+2 & \mbox{ if } t\leq t'\leq r-2t.
\end{cases} \]
\end{lem}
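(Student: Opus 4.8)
The plan is to use Observation~\ref{obs:cover} as the engine: I want to exhibit, in each regime, a vertex set $C$ with $|C|$ matching the claimed bound and with $\Delta_{\cH}(C; e_1, e_2) \le 2t-1$, so that $C$ is automatically a cover. Write $I = e_1 \cap e_2$, so $|I| = t'$, and let $D = (e_1 \cup e_2) \setminus I$ be the symmetric difference, which has size $2(r-t')$ and meets $2(r - t')$ of the $r$ parts, one vertex from $e_1$ and one from $e_2$ in each such part. In any part $P_j$, the maximum degree $\max_{v \in P_j \setminus C} d(v; e_1, e_2)$ contributed to $\Delta_{\cH}(C; e_1, e_2)$ is $2$ if an uncovered vertex of $I$ sits in $P_j$, and at most $1$ if only symmetric-difference vertices are available there; so the whole game is to place $C$ to knock the degree contributions down to a total of at most $2t-1$.

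For the first regime, $r - 2t + 1 \le t' \le r$, I would build $C$ by first taking a set of $t' - t + 1$ vertices inside $I$: removing these caps the degree-$2$ contribution from $I$ at $t - 1$ (since $|I| = t'$ and we delete $t' - t + 1$ of its vertices, leaving $t-1$ parts where $I$ contributes $2$). That already accounts for $2(t-1)$ towards $\Delta$. The remaining budget is $2t - 1 - 2(t-1) = 1$, so I must ensure the symmetric-difference parts contribute at most $1$ in total; since each such part contributes at most $1$ anyway, I cover roughly half of the $D$-parts, choosing one vertex per part to cover all but one of them — this costs $\floor{(r-t')/2}$ further vertices if done cleverly by covering a vertex shared appropriately, giving total size $(t' - t + 1) + \floor{(r-t')/2}$, exactly the claimed bound. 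For the second regime, $t \le t' \le r - 2t$, the intersection is comparatively small and the dominant cost comes from the $2(r-t')$ symmetric-difference parts; here I would put $C$ entirely (or almost entirely) inside $D$, covering enough of the degree-$1$ parts to push the surviving total below $2t$; the count $2(r - t') - (2t - 1) = 2r - 2t' - 2t + 1$ vertices to delete from the degree-$1$ parts, plus handling the intersection, should reconcile with $2r - 4t - t' + 2$ after tracking the exact parity and which parts are forced.

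The main obstacle I anticipate is the bookkeeping of \emph{which} parts host degree-$2$ versus degree-$1$ vertices and making the covering choices consistent across parts, because a single vertex of $C$ can only lower the maximum degree in its own part, and parts containing two symmetric-difference vertices (one from each of $e_1, e_2$) behave differently from parts containing an intersection vertex. Getting the floor function and the $+1$ exactly right — rather than off by one — will require carefully distinguishing the parity of $r - t'$ and verifying that the leftover uncovered part in each regime contributes exactly the residual degree the budget $2t - 1$ allows. A secondary subtlety is confirming that $C$ does not wholly contain any part (a hypothesis needed for $\Delta_{\cH}$ to be well defined), which should follow from the parts having size at least $2$ and $|C|$ being small, but I would state it explicitly.
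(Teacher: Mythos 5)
Your overall strategy --- exhibiting a set $C$ with $\Delta_{\cH}(C;e_1,e_2)\le 2t-1$ and invoking Observation~\ref{obs:cover} --- is exactly the paper's, but both of your constructions fail on the budget arithmetic, and for the same underlying reason: a symmetric-difference part contributes $1$ to $\Delta_{\cH}$ as long as \emph{either} of its two vertices is uncovered, so killing such a part costs $2$ vertices to save $1$ unit of $\Delta_{\cH}$, whereas an intersection part costs $1$ vertex to save $2$ units. In your first regime you allocate $t'-t+1$ vertices to $I$ and $\floor*{(r-t')/2}$ vertices to the $D$-parts, ``one vertex per part'', claiming this drops the $D$-contribution to at most $1$. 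It does not drop it at all: after covering one vertex of a $D$-part, the other vertex still has degree $1$, so the $D$-parts still contribute $r-t'$, and your set gives $\Delta_{\cH}(C;e_1,e_2) \ge 2(t-1)+(r-t')$, which exceeds $2t-1$ whenever $r-t'\ge 2$; genuinely reducing the $D$-contribution to $1$ would require $2(r-t'-1)$ vertices, far beyond your budget. The correct move, which is the paper's, is the opposite allocation: put \emph{all} $s=\floor*{\frac{r-t'}{2}}+t'-t+1$ vertices inside $e_1\cap e_2$ (possible since $s\le t'$ in this regime), accept the full contribution $r-t'$ from the $D$-parts --- harmless here because $r-t'\le 2t-1$ --- and verify $2(t'-s)+(r-t')\le 2t-1$. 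A short optimization (one $I$-vertex buys $2$ units of slack per unit cost, one $D$-part buys $\tfrac12$) shows any allocation that spends on $D$-parts is strictly worse, so your split cannot be repaired without moving those vertices back into the intersection.

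Your second regime has the mirror-image error: you propose deleting all but $2t-1$ of the $2(r-t')$ \emph{vertices} of $D$, but what matters is the number of surviving \emph{parts}, not surviving vertices. The paper's cover consists of all $t'$ vertices of $I$ (optimal, since each uncovered one costs $2$) together with \emph{both} vertices in exactly $r-2t+1-t'$ of the $D$-parts, leaving $2t-1$ parts completely untouched; each untouched part contributes only $1$ even though both of its vertices survive. This gives size $t'+2(r-2t+1-t')=2r-4t-t'+2$ with $\Delta_{\cH}=2t-1$ exactly, whereas your vertex-count bookkeeping overshoots the claimed bound by exactly $2t-1$. So the gap is genuine: the engine is right, but neither of your proposed covers passes the $\Delta_{\cH}\le 2t-1$ test at the claimed cardinality.
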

\begin{proof}
Let the parts of $\cH$ be $P_j$ for $j\in[r]$. If $t'\geq r-2t+1$ then $s=\floor{\frac{r-t'}{2}} + t'-t+1$ satisfies $1 \le s \le t'$. We claim that an arbitrary set $C_1$ of $s$ vertices from $e_1\cap e_2$  is a cover. Indeed,
\[\Delta_\cH(C_1;e_1,e_2)= \sum_{\substack{j\in[r]:\\e_1\cap P_j=e_2\cap P_j, \\ 
e_1\cap P_j \notin C_1}}2+ \sum_{\substack{j\in[r]:\\e_1\cap P_j\neq e_2\cap P_j}}1 =  2\card{(e_1\cap e_2)\setminus C_1}+r-t' = r + t' - 2s \leq 2t-1,\]
and thus the conclusion follows from Observation~\ref{obs:cover}.

Now consider the case where $t'\leq r-2t$.  Without loss of generality, we may assume the intersection of $e_1$ and $e_2$ is contained in the first $t'$ parts, labelling the vertices of $e_1$ as $\{u_1, \dots, u_r\}$ and of $e_2$ as $\{u_1, \dots, u_{t'}, v_{t'+1}, \dots, v_r\}$, where $u_j, v_j \in P_j$ for all $j$. 
Letting
\[ C_2 = \{u_1, \hdots, u_{t'}\} \cup \bigcup_{j = t' + 1}^{r - 2t + 1} \{u_j, v_j \},\]
we have $\Delta_{\m H}(C_2; e_1, e_2) = 2t - 1$. By Observation~\ref{obs:cover}, we can again deduce that $\tau(\m H) \le \card{C_2} = 2(r - 2t + 1) - t' = 2r - 4t - t' + 2$.
\end{proof}

Lemma~\ref{lem:2edges} will suffice to prove the first two parts of Theorem~\ref{thm:upperbound}.  For the latter parts, we shall need to consider covers consisting of vertices lying in three edges instead. Before proceeding, though, we present a reformulation of Lemma~\ref{lem:2edges} that will be more convenient for later proofs.

\begin{cor} \label{cor:2edgeagain}
Let $\eta\in \mathbb{N}$ and  let $\cH$ be an $(r,t)$-graph such that $\tau(\cH)\geq\eta+1$. Then, for all $e,f\in E(\cH)$, we have that either \begin{itemize}
    \item[$(i)$] $\card{e\cap f} \geq  2\eta +2t - r$, or
    \item[$(ii)$] $\card{e\cap f} \leq 2r-4t-\eta+1$.
\end{itemize}
\end{cor}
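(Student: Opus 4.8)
The plan is to derive Corollary~\ref{cor:2edgeagain} directly from Lemma~\ref{lem:2edges} by contraposition. The hypothesis $\tau(\cH) \geq \eta+1$ means that no cover of size $\eta$ exists, so whatever upper bound Lemma~\ref{lem:2edges} produces from a pair of edges $e,f$ must itself be at least $\eta+1$. I would set $t' = \card{e\cap f} \geq t$ and read off which of the two cases of the lemma applies, depending on whether $t' \geq r-2t+1$ or $t' \leq r-2t$, and then translate the resulting inequality on the cover bound into a constraint on $t'$.

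First I would handle the large-intersection regime. If $t' \geq r-2t+1$, Lemma~\ref{lem:2edges} gives $\tau(\cH) \leq \floor{(r-t')/2} + t'-t+1$. Combining with $\tau(\cH) \geq \eta+1$ yields $\floor{(r-t')/2} + t'-t+1 \geq \eta+1$, i.e. $\floor{(r-t')/2} + t' \geq \eta + t$. To clear the floor cleanly I would use $\floor{(r-t')/2} \leq (r-t')/2$, giving $(r-t')/2 + t' \geq \eta+t$, hence $r + t' \geq 2\eta + 2t$, which rearranges to $t' \geq 2\eta + 2t - r$. This is exactly conclusion $(i)$. I would note that this argument only used the bound supplied in the first case, so it is valid precisely when we are in that case; the point is that whenever $(i)$ fails we cannot be in the large-intersection regime, forcing us into the second case.

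Next I would treat the small-intersection regime $t' \leq r-2t$, where Lemma~\ref{lem:2edges} gives $\tau(\cH) \leq 2r-4t-t'+2$. Together with $\tau(\cH) \geq \eta+1$ this yields $2r-4t-t'+2 \geq \eta+1$, which rearranges immediately to $t' \leq 2r-4t-\eta+1$, exactly conclusion $(ii)$. To assemble the dichotomy I would argue that every pair $e,f$ falls into one of the two regimes of the lemma (they partition the range $t \leq t' \leq r$), so one of the two displayed inequalities must hold for $t'$; hence at least one of $(i)$ and $(ii)$ holds. The only subtlety is the boundary between the regimes and the direction of the floor estimate, so I would double-check that the inequality $\floor{(r-t')/2} \leq (r-t')/2$ is used in the correct direction (it is, since we want a lower bound on $t'$ from an upper bound that contains the floor).

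I do not expect a genuine obstacle here, as this is essentially a repackaging of Lemma~\ref{lem:2edges}; the main thing to be careful about is the case split and ensuring the floor is bounded in the direction that preserves the implication. One clean way to avoid any casework on which regime applies is to observe that if $(i)$ fails, then $t' < 2\eta+2t-r$; I would then verify that this places $t'$ below the threshold $r-2t$ (using the standing relationship between $\eta$, $r$ and $t$ that makes the corollary non-vacuous), so that the second case of the lemma applies and delivers $(ii)$. Making this last implication airtight — that the failure of $(i)$ pushes $t'$ into exactly the range where the second case of Lemma~\ref{lem:2edges} is available — is the one step that warrants explicit checking rather than a one-line dismissal.
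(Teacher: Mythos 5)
Your proposal is correct and follows essentially the same route as the paper's own proof: apply Lemma~\ref{lem:2edges} in each of its two regimes ($t'\geq r-2t+1$ and $t'\leq r-2t$, which together cover all values of $t'=\card{e\cap f}$), combine the resulting upper bound with $\tau(\cH)\geq \eta+1$, and rearrange to obtain $(i)$ or $(ii)$ respectively. The only difference is cosmetic --- the paper phrases the first regime as a contradiction (deducing $\tau(\cH)\leq \eta+\tfrac12$ from the assumption $t'\leq 2\eta+2t-r-1$) while you rearrange directly, and your closing worry about the case assembly is unnecessary, since the two regimes of the lemma partition all possibilities, so no extra relationship between $\eta$, $r$ and $t$ is needed.
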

\begin{proof}
Let $e,f\in E(\cH)$ be an arbitrary pair of edges of $\cH$ and take $t'=|e\cap f|$. If $t'\geq r-2t+1$, then we claim that in fact $t'\geq 2 \eta +2t-r$. Indeed, if $t'\leq2\eta +2t-r-1$, then Lemma~\ref{lem:2edges} implies that \[\tau(\cH)\leq \floor*{\frac{r-t'}{2}}+t'-t+1\leq \frac{r + t'}{2}-t+1\leq\eta + \frac{1}{2}, \]
a contradiction. 

Hence, if $(i)$ is not satisfied for $t'=\card{e\cap f}$, we must have $t'\leq r-2t$.  By Lemma~\ref{lem:2edges}, it follows that $\eta + 1 \le \tau(\m H) \le 2r - 4t - t' + 2$, from which we deduce that $t' \le 2r - 4t - \eta + 1$.
\end{proof}

The following lemma is the analogue of Lemma~\ref{lem:2edges} when constructing covers from vertices that lie in three fixed edges, as opposed to only using two edges. 

\begin{lem} \label{lem:3edges}
Let $r \ge 3t$, let $\m H$ be an $(r,t)$-graph, and let $e_1,e_2,e_3 \in E(\m H)$.  Set $t_1 = \card{e_1\cap e_2\cap e_3}$ and $t_2 = \card{e_1\cap e_2 \setminus e_3}+\card{e_1\cap e_3 \setminus e_2}+ \card{e_2\cap e_3 \setminus e_1}$. Then
\[ \tau(\m H)\leq \begin{cases}
    \frac13 (2t_1+t_2+r-3t+3) & \text{ if } r-3t+1+t_2 \leq t_1\leq r, \\
    r-3t+1+t_2 & \text{ if }  r-3t+1-t_2\leq t_1\leq r-3t+1+t_2, \\
    3r-2t_1-t_2-9t+3 & \text{ if }  0\leq t_1\leq r-3t+1-t_2.
\end{cases} \]
\end{lem}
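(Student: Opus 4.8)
The plan is to build, for any three edges $e_1,e_2,e_3$, a cover $C\subseteq e_1\cup e_2\cup e_3$ and invoke Observation~\ref{obs:cover} with $k=3$: it suffices to choose $C$ with $\Delta_{\m H}(C;e_1,e_2,e_3)\leq 3t-1$. Everything is governed by how the three edges meet the parts. Since $\m H$ is $r$-uniform and $r$-partite, each part $P_j$ contains exactly one vertex of each $e_i$, so I would first sort the parts into three types according to the coincidence pattern of $e_1\cap P_j,\, e_2\cap P_j,\, e_3\cap P_j$: the \emph{triple} parts, where all three agree (there are exactly $t_1$ of these, and the common vertex has degree $3$); the \emph{double} parts, where exactly two agree (there are exactly $t_2$ of these, the shared vertex having degree $2$ and the remaining vertex degree $1$); and the \emph{single} parts, where all three differ (there are $r-t_1-t_2$ of these, each of the three vertices having degree $1$). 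Reading off the maximum degree in each part gives $\Delta_{\m H}(\emptyset;e_1,e_2,e_3)=3t_1+2t_2+(r-t_1-t_2)=2t_1+t_2+r$.

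Thus I must drive $\Delta$ down by $D:=(2t_1+t_2+r)-(3t-1)=2t_1+t_2+r-3t+1$; since every part contributes at least $1$, we have $\Delta_{\m H}(\emptyset;e_1,e_2,e_3)\geq r\geq 3t$, so $D\geq 1$ and an empty $C$ never works (this is where the hypothesis $r\geq 3t$ enters). The crux is to track how efficiently each type of part lowers $\Delta$. Putting the degree-$3$ vertex of a triple part into $C$ drops that part's contribution from $3$ to $0$: a reduction of $3$ for a single vertex. In a double part, covering the degree-$2$ vertex drops the contribution from $2$ to $1$, and additionally covering the degree-$1$ vertex drops it to $0$: reduction $1$ per vertex, up to $2$ per part. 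In a single part only covering all three vertices helps, dropping the contribution from $1$ to $0$: reduction $1$ for three vertices. Triple parts are therefore cheapest, then double parts, and single parts the most expensive, and I would greedily use them in exactly this order.

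This greedy strategy splits precisely into the three stated regimes, according to where $D$ falls relative to $3t_1$ (the reduction available from triple parts) and $3t_1+2t_2$ (from triple and double parts together). If $D\leq 3t_1$, which rearranges to $t_1\geq r-3t+1+t_2$, covering the degree-$3$ vertices of $\ceil{D/3}$ triple parts already gives reduction $\geq D$, so $\card{C}\leq \ceil{D/3}\leq (D+2)/3=\tfrac13(2t_1+t_2+r-3t+3)$, the first case. If $3t_1\leq D\leq 3t_1+2t_2$, equivalently $r-3t+1-t_2\leq t_1\leq r-3t+1+t_2$, I cover all $t_1$ triple parts and then $D-3t_1\leq 2t_2$ further vertices among the double parts, using $t_1+(D-3t_1)=D-2t_1=r-3t+1+t_2$ vertices, the second case. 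Finally, if $D\geq 3t_1+2t_2$, i.e.\ $t_1\leq r-3t+1-t_2$, I exhaust the triple parts and all $2t_2$ double-part vertices and then fully cover $D-3t_1-2t_2$ single parts, using $t_1+2t_2+3(D-3t_1-2t_2)=3r-2t_1-t_2-9t+3$ vertices, the third case. In each regime one checks the requisite parts are available: $\ceil{D/3}\leq t_1$ in the first, $D-3t_1\leq 2t_2$ in the second, and $D-3t_1-2t_2=(r-t_1-t_2)-(3t-1)\leq r-t_1-t_2$ single parts in the third.

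The individual reductions are elementary once the part-classification is in place, so the bulk of the work is bookkeeping: checking that the threshold inequalities on $t_1$ convert correctly into the stated ranges and that in each regime the running reduction reaches $D$ without overshooting $\Delta_{\m H}(\emptyset;e_1,e_2,e_3)$, so that $C$ is a genuine cover of the claimed size. The one genuine technical point I expect to be the main obstacle is that Observation~\ref{obs:cover} requires $C$ not to contain any part $P_j$ entirely, which could fail only when a single or double part has no vertex outside $e_1\cup e_2\cup e_3$. I would dispose of this at the outset by padding each part with an isolated dummy vertex lying in no edge; this changes neither the edge set, the intersection pattern, nor $\tau(\m H)$, and guarantees $P_j\setminus C\neq\emptyset$ throughout. (Alternatively, should some $P_j\subseteq C$ occur, then $C$ already meets every edge and is a cover regardless, so the size bound still holds.)
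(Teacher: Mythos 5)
Your proof is correct and takes essentially the same approach as the paper: both classify the parts by the coincidence pattern of $e_1,e_2,e_3$ (triple/double/single), and build a cover satisfying $\Delta_{\m H}(C;e_1,e_2,e_3)\le 3t-1$ via Observation~\ref{obs:cover} by greedily taking triple-intersection vertices first, then double-part vertices, then whole single parts, with the same thresholds and arithmetic. The only cosmetic differences are that the paper splits your middle regime into two explicit constructions of equal size (covering $T$ plus some degree-$2$ vertices, versus covering $T\cup D$ plus some degree-$1$ vertices), while your budget-and-greedy bookkeeping unifies them, and that your explicit treatment of the degenerate case $P_j\subseteq C$ is a minor technicality the paper leaves implicit.
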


\begin{proof}
Observe that $t_1$ counts the number of vertices that are in all three edges, while $t_2$ counts the number of vertices in precisely two of the three edges.  Let $T = e_1 \cap e_2 \cap e_3$ be the set of $t_1$ vertices contained in all three edges and let $D$ be the set of $t_2$ vertices contained in exactly two of the three edges.   
 When building a transversal of the parts that intersects $e_1, e_2$ and $e_3$ as much as possible, it is optimal to choose as many vertices from $T$ as possible, followed by vertices from $D$.  Therefore, to minimise $\Delta_{\m H}(C; e_1, e_2, e_3)$, we will choose $C$ so as to first block the vertices in $T$, followed by those in~$D$.

Let us first consider the case when $t_1\geq r-3t+1+t_2$. Setting $s_1 = t_1 - \floor*{\frac13 (t_1 - t_2 - r + 3t - 1)} \le \frac13 (2t_1 + t_2 + r - 3t + 3)$, note that $1 \le s_1 \le t_1$.  Taking $C_1$ to be an arbitrary subset of $T$ of size $s_1$, we have
\[ \Delta_{\m H}(C_1;e_1,e_2,e_3)= 3\card{(e_1\cap e_2\cap e_3)\setminus C_1} + 2t_2 +(r-t_1-t_2)\leq 3t-1, \]
since we cannot select a vertex of $e_1 \cup e_2 \cup e_3$ in the $s_1$ parts spanned by $C_1$. It thus follows from Observation~\ref{obs:cover} that $C_1$ is a cover for $\m H$, giving the claimed bound on $\tau(\m H)$.

Next, suppose $r-3t+1\leq t_1\leq r-3t+1+t_2$, and set $s_2=r-3t+1+t_2-t_1$, noting that $0\leq s_2\leq t_2$. Take $C_2=T \cup S_2$, where $S_2$ is a subset of $D$ of size $s_2$. 
Consider a transversal of the parts that is disjoint from $C_2$. There are $t_2-s_2$ parts (those intersecting $D\setminus S_2$) in which the transversal could intersect up to two of the three edges $e_1,e_2$ and $e_3$. In all other parts  the transversal can intersect at most one of the three edges and there are $t_1$ parts (those that intersect $T$) in which the transversal must be disjoint from all three edges.  
Thus
\[\Delta_\cH(C_2;e_1,e_2,e_3)=2(t_2-s_2)+(r-t_1-(t_2-s_2))=3t-1,\]
and so, by Observation~\ref{obs:cover}, $C_2$ covers $\m H$, showing $\tau(\m H) \le \card{C_2} = r - 3t + 1 + t_2$.

In the range $r-3t+1-t_2\leq t_1\leq r-3t$, set $s_3=r-3t+1-t_1$, whence $1\leq s_3\leq t_2$. We define $D'$ to be the $t_2$ vertices which are contained in exactly one of the edges $e_1, e_2$ and $e_3$ and lie in parts which intersect $D$.  Now take $C_3= T \cup D \cup S_3$ where $S_3$ is an arbitrary subset of $D'$ of size $s_3$.  A transversal disjoint from $C_3$ can then only meet the edges $e_1, e_2$ and $e_3$ in the $t_2 - s_3$ vertices of $D' \setminus S_3$, as well as in the $r - t_1 - t_2$ parts where the three edges are pairwise disjoint. We therefore have
\[\Delta_{\m H}(C_3;e_1,e_2,e_3)=(t_2-s_3)+(r-t_1-t_2)=3t-1, \]
and so Observation~\ref{obs:cover} implies $C_3$ is a cover of the stated size.

Finally, we are left with the case when $t_1\leq r-3t+1-t_2$, for which we set $s_4 = r - 3t + 1 - t_1 - t_2$. Take $C_4 = T \cup D \cup D' \cup S_4$, where $D$ and $D'$ are defined as above and $S_4$ consists of the $3s_4$ vertices of $e_1 \cup e_2 \cup e_3$ from $s_4$ of the parts where the edges $e_1, e_2$ and $e_3$ are pairwise disjoint.  Then any transversal disjoint from $C_4$ can only meet the three edges in the $r - t_1 - t_2 - s_4$ parts not spanned by $C_4$, from which it follows that $\Delta_{\m H}(C_4; e_1, e_2, e_3) = 3t - 1$. By Observation~\ref{obs:cover}, $C_4$ is a cover of $\m H$, and so $\tau(\m H) \le \card{C_4} = t_1 + 2t_2 + 3s_4 = 3r - 2t_1 - t_2 - 9t + 3$.
\end{proof}

By applying Lemma~\ref{lem:3edges} in conjunction with Corollary~\ref{cor:2edgeagain}, we will prove the following upper bounds on the cover numbers of $(r,t)$-graphs.

\begin{prop} \label{prop:3edgeupperbound}
For all $t \ge 1$ and $r \ge 3t$, we have 
\[  \Ryser(r,t)\leq \begin{cases}
    \ceil*{\frac{5r-10t+2}{4}} +\ceil*{\frac{6t-r-1}{8}} & \mbox{ if } 3t\leq r \leq 5t-2, \\
    \ceil*{\frac{3r-1}{4}} + \ceil*{\frac{9r-44t+13}{8}} & \mbox{ if } 5t-1 \leq r \leq \frac{52t-13}{9}.
\end{cases} \]
\end{prop}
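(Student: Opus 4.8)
My understanding is that we need to combine the three-edge bound (Lemma 3.8) with the two-edge dichotomy (Corollary 3.7) to get upper bounds on $\Ryser(r,t)$ in two ranges. Let me sketch the strategy.

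The plan is to argue by contradiction. Suppose $\Ryser(r,t)$ exceeds the claimed bound, so there is an $(r,t)$-graph $\m H$ with $\tau(\m H) \ge \eta + 1$ for the appropriate threshold $\eta$ (chosen to be the claimed upper bound minus one). Then I can apply Corollary~\ref{cor:2edgeagain} with this value of $\eta$: for every pair of edges $e,f$, we learn that $\card{e \cap f}$ is either large (at least $2\eta + 2t - r$) or small (at most $2r - 4t - \eta + 1$). The first step is to set things up so that, given $\tau(\m H)$ is large, every pairwise intersection is forced into one of these two regimes, and to track what this says about the quantities $t_1, t_2$ appearing in Lemma~\ref{lem:3edges}.

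Next I would pick three edges $e_1, e_2, e_3$ and analyse $t_1 = \card{e_1\cap e_2\cap e_3}$ and $t_2$, the number of vertices in exactly two of the edges. Each of the three pairwise intersections $\card{e_i\cap e_j}$ equals (contribution to $t_1$) plus (its contribution to $t_2$), so Corollary~\ref{cor:2edgeagain} constrains the possible values of $t_1 + (\text{part of } t_2)$. Feeding these constraints into the piecewise bound of Lemma~\ref{lem:3edges} should yield a contradiction with the assumption $\tau(\m H) \ge \eta+1$, since Lemma~\ref{lem:3edges} would produce a cover smaller than $\eta + 1$. The art is in choosing which three edges to examine and extracting the extremal configuration of $(t_1, t_2)$: likely I would choose the pair $e_1, e_2$ to be in the ``large intersection'' case (maximising $t'$) and then bring in a third edge $e_3$, using the dichotomy on the pairs $\{e_1,e_3\}$ and $\{e_2,e_3\}$ to bound $t_2$ and locate $t_1$ in the correct branch of Lemma~\ref{lem:3edges}.

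I expect the main obstacle to be the bookkeeping that reconciles the two given ranges, $3t \le r \le 5t-2$ and $5t-1 \le r \le \frac{52t-13}{9}$, with the threshold $\eta$ and with which branch of Lemma~\ref{lem:3edges} is active. The ceiling functions in the statement (and the fractional thresholds such as $\frac{52t-13}{9}$) strongly suggest that one must carefully optimise the choice of $\eta$ against the three cases of Lemma~\ref{lem:3edges}, rounding at just the right moments, and that the two ranges correspond to different active branches. Concretely, I would solve for the value of $\eta$ at which the worst-case output of Lemma~\ref{lem:3edges} (maximised over admissible $(t_1,t_2)$ subject to the Corollary~\ref{cor:2edgeagain} constraints) first drops to $\eta$, and check that this crossover point matches the stated piecewise bound. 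The delicate inequalities balancing $2\eta + 2t - r$ against the branch-switching values $r - 3t + 1 \pm t_2$ are where the precise constants, and hence the need for separate ranges, will emerge.
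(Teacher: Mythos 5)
Your high-level skeleton (contradiction hypothesis, Corollary~\ref{cor:2edgeagain} for pairs, then Lemma~\ref{lem:3edges} for a triple) matches the paper's, but the engine of the paper's proof is missing, and without it your plan stalls exactly at the step you call ``the art.'' The dichotomy of Corollary~\ref{cor:2edgeagain} alone does not exclude triples of edges for which Lemma~\ref{lem:3edges} is useless. Concretely, take three edges whose pairwise intersections all have size exactly $t$ and are pairwise disjoint, so that $t_1 = 0$ and $t_2 = 3t$. Throughout the first range this is consistent with branch $(ii)$ of the corollary (e.g.\ at $r=30$, $t=7$ one has $\eta = x+z = 23$ and $t = 7 \le 2r-4t-\eta+1 = 10$), yet it falls into the middle case of Lemma~\ref{lem:3edges} and yields only $\tau(\m H) \le r-3t+1+t_2 = r+1$, weaker than the trivial bound $r-t+1$. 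Since, as far as pairwise intersection sizes can tell, the hypergraph might contain only such triples, ``maximising the output of Lemma~\ref{lem:3edges} over admissible $(t_1,t_2)$ subject to the Corollary~\ref{cor:2edgeagain} constraints'' never drops below the trivial bound, and no choice of $\eta$ can produce the contradiction you are after.

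What the paper does, and what your proposal lacks, is a \emph{second} use of the assumption $\tau(\m H)\ge \eta+1$ to manufacture a good triple rather than hoping the pairwise dichotomy locates one. Writing the target bound as $\eta = x+z$ with $x = \ceil*{\frac{5r-10t+2}{4}}$ and $z = \ceil*{\frac{6t-r-1}{8}}$ (the two ceiling terms are not rounding artifacts of an optimisation: they are the sizes of two different pieces of a set), the paper takes $e_1,e_2$ with \emph{minimum} intersection, WLOG exactly $t$ (note: minimum, not maximum as you suggest), picks $Z \subseteq e_1\cap e_2$ of size $z$ and $X \subseteq e_1\setminus e_2$ of size $x$, and observes that $Y = X\cup Z$ has size exactly $\eta$, hence is not a cover; so some edge $e_3$ satisfies $e_3 \cap Y = \emptyset$. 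This disjointness is what rules out the bad configuration above and supplies the inequalities $z \le a \le b \le r-t-x$ and $a \le c$ (where $a,b,c$ are the sizes of the three pairwise-only intersections), and it further forces $\card{e_2\cap e_3} \le r-2z$, which together with the dichotomy pins $\card{e_2\cap e_3}$ into the small branch. Only with these extra constraints do all three cases of Lemma~\ref{lem:3edges} — and all three do arise, in both ranges; the two ranges differ only in the choice of $(x,z)$ — close with $\tau(\m H) \le x+z$, the desired contradiction.
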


Before proving this proposition, we observe that we have the necessary bounds to establish our main result.

\begin{proof}[Proof of Theorem~\ref{thm:upperbound}]
We prove the theorem by induction on $r - t$.  For the base case, we have $t = r$.  Trivially, an $(r,r)$-graph can have at most one edge, and thus can be covered by a single vertex.  Thus $\Ryser(r,r) = 1$, as stated in the first case of the theorem.

For the induction step, suppose $r - t \ge 1$, and let $\m H$ be an $(r,t)$-graph of maximum cover number, so that $\Ryser(r,t) = \tau(\m H)$. As previously stated, we shall use Lemma~\ref{lem:2edges} to prove the first two cases of the theorem. To this end, let $e_1, e_2 \in E(\m H)$ be a pair of edges with the smallest intersection. We may assume that $\card{e_1 \cap e_2} = t$, as otherwise $\m H$ is in fact an $(r,t+1)$-graph, and we are done by induction (as our upper bound on $\Ryser(r,t)$ is decreasing in $t$).

Applying Lemma~\ref{lem:2edges} with the edges $e_1$ and $e_2$, we have $t' = t$, from which it follows that
\[ \Ryser(r,t) = \tau(\m H) \le \begin{cases}
    \floor*{\frac{r-t}{2}} + 1 & \mbox{ if } r - 2t + 1 \le t \le r, \\
    2r - 5t + 2 & \mbox{ if } t \le r - 2t.
\end{cases} \]
Simplifying the ranges for which these bounds hold, we see that the first is valid when $r \le 3t - 1$, as required for the first case of the theorem, while the second bound above is valid provided $r \ge 3t$.

The latter two cases of the theorem are direct consequences of Proposition~\ref{prop:3edgeupperbound}, which we can apply whenever $r \ge 3t$. To simplify the bounds, we estimate the ceiling terms, obtaining
\[ \ceil*{\frac{5r-10t+2}{4}} + \ceil*{\frac{6t-r-1}{8}} \le \frac{5r - 10t + 5}{4} + \frac{6t - r + 6}{8} = \frac{9r - 14t}{8} + 2 \]
and
\[ \ceil*{\frac{3r-1}{4}} + \ceil*{\frac{9r-44t+13}{8}} \le \frac{3r + 2}{4} + \frac{9r - 44t + 20}{8} = \frac{15r - 44t}{8} + 3. \]
That is, Proposition~\ref{prop:3edgeupperbound} implies that whenever $r \ge 3t$, we have
\[ \Ryser(r,t) \le \begin{cases}
    \frac{9r - 14t}{8} + 2 & \mbox{ if } 3t \le r \le 5t - 2, \\
    \frac{15r - 44t}{8} + 3 & \mbox{ if } 5t - 1 \le r \le \frac{52t - 13}{9}.
\end{cases} \]
This matches the latter two cases of Theorem~\ref{thm:upperbound}.  Finally, we note that the bound $\frac{9r - 14t}{8} + 2$ improves the earlier bound of $2r - 5t + 2$ whenever $r \ge \frac{26}{7}t$, justifying the endpoints of the ranges of the second and third cases in the theorem.
\end{proof}

All that remains is the proof of Proposition~\ref{prop:3edgeupperbound}, which we delay no further.

\begin{proof}[Proof of Proposition~\ref{prop:3edgeupperbound}]
In the first case, let $r$ and $t$ be such that $3t\leq r \leq 5t-2$, and define
\[x =\ceil*{\frac{5r-10t + 2}{4}} \quad \mbox{ and } \quad z =\ceil*{\frac{6t-r-1}{8}}.\]
Note that  \begin{equation} \label{eq:zandt}
    1\leq z \leq t \qquad \mbox{ and } \qquad 0\leq x \leq r-t,
\end{equation}
using here that $3t\leq r \leq 5t-2$.
Suppose for contradiction that $\m H$ is an $(r,t)$-graph with $\tau(\cH)\geq x+z+1$. Applying Corollary \ref{cor:2edgeagain}, any pair of edges $e,f\in E(\cH)$ must satisfy
\begin{equation} \label{eq:2edgerestriction}
    \card{e\cap f} \geq  2x+2z +2t -r \qquad
\mbox{ or } \qquad 
\card{e\cap f} \leq 2r-4t-x-z+1.
\end{equation}
Now take $e_1, e_2\in E(\cH)$ to be two edges such that $\card{e_1\cap e_2}=t$ (again, we may assume such a pair exists, as otherwise $\m H$ is an $(r,t+1)$-graph and we obtain a stronger bound on $\tau(\m H)$).  Let $Z\subseteq e_1\cap e_2$ be a set of $z$ vertices and $X \subseteq e_1 \setminus e_2$ a set of $x$ vertices, noting that this is possible due to~\eqref{eq:zandt}. We take $Y=X\cup Z$ and note that $Y$ intersects both $e_1$ and $e_2$ (as $z\geq 1$) and is not a cover as it has size exactly $x+z$ and we assumed that $\tau(\cH)\geq x+z+1$. Therefore, there exists an edge $e_3 \in E(\m H)$ such that $e_3\cap Y=\emptyset$. We define $a,b,c \in \mathbb{N}$ as follows:
\[ a = \card{e_1\cap e_2 \setminus e_3}, \quad b = \card{e_1\cap e_3 \setminus e_2} \quad \mbox{and} \quad c = \card{e_2\cap e_3\setminus e_1}. \]
Observe that $\card{e_1\cap e_2 \cap e_3}=t-a$. Using the fact that $e_3 \cap Y=\emptyset$, we can derive some bounds on the parameters $a, b$ and $c$. Indeed, since $e_3$ is disjoint from $Z$, $a \ge z$.  As $\card{e_1 \cap e_3} = b + (t-a) \ge t$, we must further have $b \ge a$, while considering $\card{e_2 \cap e_3}$ similarly shows $c \ge a$.  Finally, as $e_3$ is disjoint from $X$, we must have $b \le r - t - x$.  Putting this all together, we have
\begin{equation} \label{eq:spaceboundaries}
    z \leq a \leq b \leq r-t-x \quad \mbox{and} \quad a \le c. 
\end{equation}

A further restriction on the parameters comes from considering $\card{e_2\cap e_3}=(t-a)+c$. We have from~\eqref{eq:spaceboundaries} that $b \geq z$ and, since $\cH$ is $r$-partite, in each of the $b$ parts which contain vertices of $e_1 \cap e_3 \setminus e_2$, there are no vertices which lie in $e_2 \cap e_3$. Moreover $e_3$ and $e_2$ are also disjoint in the $z$ parts which host vertices of $Z$. Thus we can conclude that 
\[t+c-a= \card{e_2\cap e_3} \leq r-b-z \leq r-2z. \]
Due to the fact that
\begin{equation} \label{eq:nobigintersection}
2x+2z+2t-r\geq \frac{5r-6t+3}{4} > \frac{5r-6t+1}{4}\geq r-2z,
\end{equation}
it follows from~\eqref{eq:2edgerestriction} that we must in fact have
\begin{equation} \label{eq:smallintersection}
    t+c-a= \card{e_2\cap e_3} \leq 2r-4t-x-z+1.
\end{equation}
We now look to apply Lemma~\ref{lem:3edges} to the three edges $e_1,e_2$ and $e_3$ to show that $\tau(\cH) \leq x +z$, thus reaching a contradiction. To this end, note that in the notation of Lemma~\ref{lem:3edges}, we have $t_1=t-a$ and $t_2=a+b+c$. First suppose that our parameters fall into the first range given by the upper bound in Lemma~\ref{lem:3edges}. That is, $t-a \geq r-3t+1+a+b+c$ or, rearranging, 
\begin{equation} \label{eq:case1weakbound}
    2a+b+c \leq 4t-r-1.
\end{equation}
We then have that 
\begin{equation}
    \label{eq:case1bound}\tau(\cH) \leq \frac{r+b+c-a-t+3}{3} \leq \frac{3t-3a+2}{3}\leq \frac{3t+2}{3}\leq x+z,  \end{equation}
using~\eqref{eq:case1weakbound} in the second inequality, the fact that $a\geq 0$ in the third and the fact that $r\geq 3t$ in the last inequality.  

Now we turn to the second case of Lemma~\ref{lem:3edges} and observe that the given bound  is \begin{align*}
     \tau(\cH)&\leq r-3t+1+a+b+c \\ &\leq r-3t+1 +a +b + (2r-5t-x-z+1+a) \\ &\leq 3r-8t+2-x-z+3b  \\ &\leq 3r-8t+2-x-z+3(r-t-x)\\ &=6r-11t-4x-z+2, 
     \end{align*}
where we used~\eqref{eq:smallintersection} to bound $c$ in the second inequality, and the bounds $a \le b$ and $b \le r - t - x$ from~\eqref{eq:spaceboundaries} in the third and fourth inequalities respectively. One has that 
\begin{equation} \label{eq:case2bound}
    5x+2z\geq 5\left(\frac{5r-10t + 2}{4}\right)+2 \left(\frac{6t-r-1}{8}\right)\geq 6r-11t+2,
\end{equation}
and hence $\tau(\cH)\leq x+z$ in this case too.

Finally, in the third case of Lemma~\ref{lem:3edges}, we have
\[ \tau(\cH)\leq 3r-11t+3+a-b-c\leq 3r-11t+3-a \leq 3r-11t+3-z, \]
 using~\eqref{eq:spaceboundaries} to bound $b, c \ge a$ in the second inequality and $a \ge z$ in the last inequality. As 
\begin{equation} \label{eq:case3bound}
    x+2z\geq \frac{5r-10t+2}{4}+\frac{6t-r-1}{4} = r - t + \frac14 \geq 3r-11t+3 
\end{equation}
for all $r\leq 5t-2$, we can conclude that $\tau(\cH)\leq x+z$ in this case as well. Therefore, we have shown $\tau(\cH)\leq x+z$, providing the contradiction needed to complete the proof of the first bound. 

\medskip

The proof of the second bound is almost identical to that of the first and so we omit the details. The difference here comes as we define $x$ and $z$ as follows:
\[x =\ceil*{\frac{3r-1}{4}} \quad \mbox{ and } \quad z = \ceil*{\frac{9r-44t+13}{8}}.\]
The rest of the proof goes through verbatim and one simply has to check that the inequalities~\eqref{eq:zandt},~\eqref{eq:nobigintersection}, \eqref{eq:case1bound},~\eqref{eq:case2bound} and~\eqref{eq:case3bound} all hold.  One has to use the fact that $5t-1 \leq r \leq \frac{52t-13}{9}$ in order to prove~\eqref{eq:zandt},~\eqref{eq:nobigintersection} and~\eqref{eq:case1bound}. The lower bound on $r$ is necessary for~\eqref{eq:nobigintersection} to hold, whilst the upper bound on $r$ is necessary so that the upper bound on $z$ in~\eqref{eq:zandt} always holds. Given this, we can again conclude $\tau(\m H) \le x + z$.
\end{proof}

\subsection{$k$-wise intersecting hypergraphs} \label{sec:kwise}

As we shall show now, our exact results for $\Ryser(r,t)$ allow us to obtain tight bounds on the cover numbers of $k$-wise $t$-intersecting $r$-uniform $r$-partite hypergraphs.

\begin{proof}[Proof of Theorem~\ref{thm:kwise}]
We prove the upper bound by induction on $k$.  The base case, when $k = 2$ (and $t > \tfrac{r}{3}$), is the first case of Theorem~\ref{thm:upperbound}.

For the induction step, we have $k \ge 3$ and $t \ge 1$.  Given $k-1$ edges $e_1, \hdots, e_{k-1} \in E(\m H)$, let $U = \cap_{i=1}^{k-1} e_i$.  The $k$-wise intersection condition implies that every edge meets $U$ in at least $t$ elements.  Thus, if $B$ is obtained by removing $t-1$ elements from $U$, $B$ must be a cover for $\m H$.

If there are $k-1$ edges whose intersection has size at most $\left\lfloor \frac{r-t}{k} \right \rfloor + t$, we are done.  We may therefore assume $\m H$ is $(k-1)$-wise $t'$-intersecting, where $t' = \left\lfloor \frac{r-t}{k} \right\rfloor + t + 1$.  Note that if $k = 3$, then
\[ t' = \left\lfloor \frac{r-t}{3} \right\rfloor + t + 1 \ge \frac{r+2t}{3} > \frac{r}{3}. \]

Hence, by induction, $\tau(\m H) \le \left \lfloor \frac{r-t'}{k-1} \right \rfloor + 1$.  Define integers $a,b$ such that $0 \le b \le k-1$ and  $r - t = ak + b$ and note that it follows from the definition of $t'$ that  $t' = a + t + 1$.  We then have
\[ \left \lfloor \frac{r-t'}{k-1} \right\rfloor + 1 = \left \lfloor \frac{ r - a - t - 1 }{k-1} \right \rfloor + 1 = \left\lfloor \frac{a(k-1) + b - 1}{k-1} \right\rfloor + 1 \le a + 1 = \left\lfloor \frac{r-t}{k} \right\rfloor + 1, \]
completing the induction.

To finish, we show that the bound is best possible.  Setting $\ell = \left\lfloor \frac{r-t}{k} \right\rfloor$, consider the hypergraph $\m H_{\ell}^r$ from Definition~\ref{def:1levelconstruction}.  To see that $\m H_{\ell}^r$ is $k$-wise $t$-intersecting, observe that each edge misses $\ell$ vertices of the form $(0,j)$.  Hence, in the intersection of $k$ edges, we can miss at most $k \ell$ of these $r$ vertices, and thus the $k$ edges must intersect in at least $r - k \ell \ge t$ vertices, as required.  By Proposition~\ref{prop:lowerboundcalcs}, $\tau(\m H_{\ell}^r) = \ell + 1 = \left\lfloor \frac{r-t}{k} \right\rfloor + 1$, matching the upper bound.
\end{proof}

\section{Further variants and open problems} \label{sec:variants}

In this paper, we have studied Ryser's Conjecture for $t$-intersecting hypergraphs.  In particular, we have shown $\Ryser(r,t) = \floor{\frac{r-t}{2}} + 1$ whenever $r \le 3t - 1$, and have proved Conjecture~\ref{conj:weak} for all but finitely many pairs $(r,t)$ satisfying $r \le \frac{36t - 17}{7}$.  Given these results, it is natural to ask what happens when $r$ is larger with respect to $t$.

Since the upper bounds of Theorem~\ref{thm:upperbound} are obtained by considering configurations of two and three edges (Lemmas~\ref{lem:2edges} and~\ref{lem:3edges} respectively), the obvious next step is to prove an analogous result for configurations of four edges. However, as one increases the number of edges in the configuration, the number of variables (representing the intersections of these edges) grows exponentially and one has much less control over the values that these sizes of intersections can have. Indeed, even with just four edges, we could not see a way to channel our ideas to get a stronger upper bound.

Another approach to understanding the behaviour of $\Ryser(r,t)$ is to try and determine the value of the function for small values of $r$ and $t$, using this as a testing ground for new ideas to give more general proofs. We considered the smallest open cases: $3 \le \Ryser(6,2) \le 4$ and $ 3\le \Ryser(7,2) \le 5$, where the lower bounds follow from Theorem~\ref{thm:lowerbound}, the upper bound on $\Ryser(6,2)$ follows from Theorem~\ref{thm:upperbound}  and the upper bound on $\Ryser(7,2)$ follows from the work of  Kir\'aly and T\'othm\'er\'esz~\cite{kirtot}. We managed to improve these upper bounds, showing that $\Ryser(6,2)=3$ and $\Ryser(7,2)\le 4$. Unfortunately, the arguments for these two new bounds required ad hoc methods and, as we doubt such arguments will lead to a significantly wider range of results, we have chosen to omit these proofs.

With regards to the broader picture, we concede that it may be challenging to resolve Conjecture~\ref{conj:weak} in full, since the case $t = 1$ is the intersecting case of Ryser's Conjecture itself.  As discretion is the better part of valour, one might restrict one's attention to $t \ge 2$ and seek in these cases to prove the conjecture and, with a bit more ambition, to fully determine $\Ryser(r,t)$.  In this range, given the lack of a better construction, Theorem~\ref{thm:exactvalue}, and the $k$-wise result of Theorem~\ref{thm:kwise}, we propose the following conjecture.

\begin{conj} \label{conj:strong}
For all $2 \le t \le r$,
\[ \Ryser(r,t) = \floor*{\frac{r-t}{2}} + 1. \]
\end{conj}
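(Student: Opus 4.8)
Since Theorem~\ref{thm:lowerbound} already supplies the matching lower bound $\Ryser(r,t) \ge \floor*{\frac{r-t}{2}}+1$ for every pair $(r,t)$, the entire content of Conjecture~\ref{conj:strong} lies in the upper bound $\Ryser(r,t) \le \floor*{\frac{r-t}{2}}+1$, and by Theorem~\ref{thm:exactvalue} the only outstanding range is $r \ge 3t$. The plan is to push the configuration method underlying Lemmas~\ref{lem:2edges} and~\ref{lem:3edges} to configurations of $k$ edges, with $k$ allowed to grow as $r/t$ grows, and to close the argument by the same bootstrapping scheme that carried Corollary~\ref{cor:2edgeagain} into Proposition~\ref{prop:3edgeupperbound}.

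The first step is a $k$-edge analogue of Lemma~\ref{lem:3edges}. Given edges $e_1, \dots, e_k \in E(\m H)$, I would record their \emph{intersection profile}, namely the numbers $n_S$ of parts $P_j$ in which exactly the edges indexed by $S$ meet, for each nonempty $S \subseteq [k]$. As in the proofs of Lemmas~\ref{lem:2edges} and~\ref{lem:3edges}, a cover is built greedily: one blocks vertices of highest multiplicity first (those in all $k$ edges, then those in $k-1$ edges, and so on), so that every transversal avoiding the chosen set $C$ meets $e_1, \dots, e_k$ with small total multiplicity. Once $\Delta_{\m H}(C; e_1, \dots, e_k) \le kt-1$, Observation~\ref{obs:cover} certifies that $C$ is a cover. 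This yields a cover whose size is an explicit function of the profile $(n_S)$, and the goal is to show that, for a suitable choice of the $k$ edges, this size never exceeds $\floor*{\frac{r-t}{2}}+1$.

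To make the edges cooperate, I would iterate the idea behind Corollary~\ref{cor:2edgeagain}: assuming for contradiction that $\tau(\m H) \ge \floor*{\frac{r-t}{2}}+2$, the $(k-1)$-edge bound forces strong constraints on the mutual intersections of any $k-1$ edges, just as Corollary~\ref{cor:2edgeagain} dichotomises pairwise intersections into a ``large'' or a ``small'' regime. Starting from a pair $e_1, e_2$ with $\card{e_1 \cap e_2} = t$, one selects a non-cover $Y$ of the target size out of the intersection structure, finds an edge disjoint from $Y$, and repeats; feeding the resulting $k$-tuple into the $k$-edge lemma should then contradict the assumption by producing a cover of size at most $\floor*{\frac{r-t}{2}}+1$.

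The hard part---and the reason this remains a conjecture---is the optimisation at the heart of the $k$-edge lemma. The profile carries $2^k - 1$ parameters, and the available constraints ($r$-partiteness, $t$-intersection of every pair, and the inductively imposed bounds) pin them down far more loosely than in the $k = 3$ case, where the single inequality $\card{e_2 \cap e_3} \le 2r - 4t - x - z + 1$ sufficed to force the decisive small-intersection regime. Indeed, the three-edge bound $\tfrac{9r-14t}{8}+2$ already lies strictly above $\floor*{\frac{r-t}{2}}+1$ as soon as $r \ge 3t$, so even an optimal handling of three edges cannot reach the conjectured value; one genuinely needs $k \to \infty$ together with a uniform way to bound the greedy cover across all admissible profiles. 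I expect the case-by-case hand optimisation of Proposition~\ref{prop:3edgeupperbound} not to scale, and that success requires either a clean global relaxation (an LP or weighting over intersection profiles whose optimum is exactly $\floor*{\frac{r-t}{2}}+1$ and is attained on the extremal hypergraph $\m H^r_\ell$) or a stability argument showing that any near-extremal $(r,t)$-graph must resemble $\m H^r_\ell$, thereby sidestepping the combinatorial explosion entirely. As already noted, four edges resist the direct method, so locating the right invariant that closes the induction simultaneously for all $k$ is the principal obstacle.
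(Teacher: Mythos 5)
This statement is not a theorem of the paper: it is Conjecture~\ref{conj:strong}, posed as an open problem precisely because no proof is known, and the paper's authors themselves express only partial confidence in it. Accordingly there is no ``paper proof'' to match, and your submission is not a proof either --- as you candidly concede. What you have written is a research programme: the lower bound is indeed settled by Theorem~\ref{thm:lowerbound}, and Theorem~\ref{thm:exactvalue} settles the range $r \le 3t-1$, so the entire content of the conjecture is the upper bound for $r \ge 3t$; but for that range your argument never materialises. The $k$-edge analogue of Lemma~\ref{lem:3edges} is not stated, let alone proved; the greedy-blocking bound as a function of the $2^k-1$ profile parameters $(n_S)$ is not written down; the bootstrapping step (the analogue of Corollary~\ref{cor:2edgeagain} for $k-1$ edges) is not formulated; and there is no argument that the resulting bounds converge to $\floor*{(r-t)/2}+1$ as $k \to \infty$, nor even that they improve monotonically in $k$.

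The gap is not a technicality that routine effort would fill: the paper reports (Section~\ref{sec:variants}) that the authors attempted exactly this extension to configurations of four edges and could not make it work, because the number of intersection parameters grows exponentially while the available constraints ($r$-partiteness, pairwise $t$-intersection, and inductively imposed bounds) control them too loosely. Your own diagnosis agrees with this, and your correct observation that the three-edge bound $\tfrac{9r-14t}{8}+2$ strictly exceeds $\floor*{(r-t)/2}+1$ for all $r \ge 3t$ shows that any finite-$k$ version of the method, handled as in Proposition~\ref{prop:3edgeupperbound}, cannot by itself reach the conjectured value. So the proposal correctly locates the difficulty but does not overcome it; the statement remains open, and your text should be read as a (reasonable) plan of attack, not as a proof.
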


If we are to be honest, it is only a proper (but non-empty) subset of the authors that fully believes in this conjecture.  That said, we are all happy to pose it, in the hopes of provoking the community into finding a proof or a counterexample.  Should the conjecture be true, it would represent a marked difference between the intersecting and $t$-intersecting ($t \ge 2$) versions of Ryser's Conjecture.  Though this may be surprising at first sight, such discrepancies are not unheard of in extremal combinatorics.

At the very least, the determination of the asymptotic behaviour of $\Ryser(r,t)$ when $t$ is linear in $r$ is an intriguing question in its own right, and even just reducing the grey area in Figure~\ref{fig:asymptotics} seems to require new ideas. This further motivates the pursuit of Ryser-type problems for various other classes of hypergraphs commonly studied in the field, some of which we outline below.  We believe that the techniques and constructions used in answering these questions could shed further light on Conjecture~\ref{conj:strong} and perhaps even on Ryser's Conjecture.\footnote{For instance, our proof that $\Ryser(6,2) = 3$ reduced the problem to the strictly $2$-intersecting case, and our proof that $\Ryser(7,2) \le 4$ used a double counting argument inspired by the proof of Lemma \ref{lem:rtd_counting} below.}

\subsection{Strictly $t$-intersecting hypergraphs} \label{sec:strictly}

One advantage of the construction of Bustamante and Stein~\cite{bustastein}, in which each vertex of the truncated projective plane is replaced by a set of $t$ vertices, is that it is regular. On the other hand, in our construction for Theorem~\ref{thm:lowerbound}, while the majority of vertices are in at most one edge, some vertices have very large degree. This begs the question of whether or not one can find a regular construction matching our bound, but, as we shall now show, the great irregularity is necessary for the cover number of the hypergraph to be large.

To start, observe that if $\mc H$ is a $d$-regular $(r,t)$-graph, and $V_i$ is any one of the $r$ parts, then $\mc H$ has exactly $d \card{V_i}$ edges, since each edge meets $V_i$ in exactly one vertex. Since any set $S \subset V(\mc H)$ can cover at most $d \card{S}$ edges, it follows that $\tau(\mc H) \ge \card{V_i}$; that is, the parts are minimum covers. Therefore, maximising the cover number of $d$-regular $(r,t)$-graphs is equivalent to maximising the number of vertices in such graphs.

An upper bound was provided by Frankl and F\"uredi~\cite{frafur86}, with a short proof later given by Calderbank~\cite{calderbank87}: they proved that any regular $t$-intersecting $r$-uniform hypergraph can have at most $(r^2 - r + t)/t$ vertices. In the $r$-partite setting, it follows that we have a part of size at most $(r-1)/t + 1/r < r/t$, and hence this is an upper bound on the cover number of any regular $(r,t)$-graph. Note that for $t \ge 3$ this is significantly smaller than the lower bound of Theorem~\ref{thm:lowerbound}, showing that the added condition of regularity considerably restricts the cover number of $(r,t)$-graphs.

Frankl and F\"uredi~\cite{frafur86} and Calderbank~\cite{calderbank87} further showed that the hypergraphs achieving equality in their bound are precisely the symmetric $2$-$(v,r,t)$ designs, a class of hypergraphs we now define.

\begin{dfn} \label{def:design}
Given $v, r, t \in \mathbb{N}$, a \emph{$2$-$(v,r,t)$ design} is an $r$-uniform hypergraph on $v$ vertices with the property that any two vertices share exactly $t$ common edges. The design is \emph{symmetric} if it has exactly $v$ edges.
\end{dfn}

Note that designs are never $r$-partite, since two vertices in the same part could not have any common edges. One might therefore hope for an even smaller upper bound if the hypergraph is also $r$-partite, but the construction of Bustamante and Stein shows that there can be regular $(r,t)$-graphs with cover number $r/t - 1$, and so there is not much room for improvement in general.\footnote{Using the truncated projective plane for some prime power $q$, the Bustamante-Stein construction gives $d$-regular $(r,t)$-graphs with cover number $\tau(\cH)=d=r/t-1=q$. For other values of the parameters $d, r$ and $t$, it may be possible to obtain better upper bounds, for instance in Corollary \ref{cor:regular}, which gives a stronger bound when $d<r/t-1$. }

Still, when it comes to $(r,t)$-graphs, our next result shows that one can obtain strong upper bounds on the cover number even if the condition of regularity is weakened to just having some control over the minimum and maximum degrees.

\begin{lem}
\label{lem:rtd_counting}
Let $\Delta$ be the maximum degree and $\delta$ the minimum degree of an $(r, t)$-graph $\m H$. 
Then 
\[\tau(\m H) \leq \left(\frac{\Delta - 1}{\delta}\right) \frac{r}{t} - \frac{\Delta - \delta - 1}{\delta}.\]
\end{lem}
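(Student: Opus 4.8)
The plan is to bound $\tau(\m H)$ by combining two short double counts: an estimate of $\tau(\m H)$ in terms of the number of edges $m := \card{E(\m H)}$ and the degrees, together with a Frankl--F\"uredi-style bound controlling $m$ in terms of $r$, $t$ and $\Delta$. Throughout I would discard isolated vertices, so that every remaining vertex $v$ has degree $d(v)$ (the number of edges through it) satisfying $\delta \le d(v) \le \Delta$; in particular this uses that $\delta \ge 1$, which is implicit in the statement since $\delta$ appears in a denominator.

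First I would observe that, since $\m H$ is $r$-uniform and $r$-partite, each edge meets each part $P_j$ in exactly one vertex; hence every part is a cover and $\tau(\m H) \le \card{P_j}$ for all $j$. To exploit this sharply, I would take a vertex $w$ of maximum degree $\Delta$ and let $P$ be the part containing it. Counting incidences between $P$ and the edges gives $\sum_{v \in P} d(v) = \sum_{f \in E(\m H)} \card{f \cap P} = m$, since $\card{f \cap P} = 1$ for every edge $f$. Isolating the contribution of $w$ and using $d(v) \ge \delta$ for the remaining vertices of $P$ yields $m \ge \Delta + (\card{P} - 1)\delta$, and therefore
\[ \tau(\m H) \le \card{P} \le 1 + \frac{m - \Delta}{\delta}. \]
The point of choosing the part of a maximum-degree vertex (rather than an arbitrary part) is precisely that it sharpens the naive bound $\tau(\m H) \le m/\delta$ by exactly the amount needed to produce the term $-(\Delta - \delta - 1)/\delta$ in the final estimate.

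Next I would bound $m$. Fixing any edge $e$ and summing degrees over its vertices, $\sum_{v \in e} d(v) = \sum_{f \in E(\m H)} \card{f \cap e}$. Separating the term $f = e$, which contributes $r$, and using that $\m H$ is $t$-intersecting so that $\card{f \cap e} \ge t$ for each of the other $m-1$ edges, gives $\sum_{v \in e} d(v) \ge r + (m-1)t$. On the other hand $\sum_{v \in e} d(v) \le r\Delta$, so rearranging yields $m \le 1 + \frac{r(\Delta - 1)}{t}$. Substituting this into the displayed bound and simplifying gives exactly
\[ \tau(\m H) \le \left(\frac{\Delta - 1}{\delta}\right)\frac{r}{t} - \frac{\Delta - \delta - 1}{\delta}. \]

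Since each individual step is an elementary double count, I do not expect a genuine obstacle; the only real care is bookkeeping. The key idea -- and the step most easily gotten wrong -- is to apply the ``a part is a cover'' bound to the part hosting a maximum-degree vertex, as this is what converts the loose $m/\delta$ into the claimed constant. I would finish by double-checking the arithmetic of the final substitution, namely that $1 + (1 + r(\Delta-1)/t - \Delta)/\delta$ simplifies to the stated right-hand side, and by confirming that ignoring isolated vertices affects neither the cover number nor the degrees $\delta, \Delta$.
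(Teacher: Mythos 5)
Your proof is correct and follows essentially the same argument as the paper: the same two double counts, bounding the number of edges $m$ from above by $(\Delta-1)\frac{r}{t}+1$ via the $t$-intersecting property, and from below by $(\tau(\m H)-1)\delta + \Delta$ using the part containing a maximum-degree vertex as a cover. The only cosmetic differences are that you include the term $f=e$ in the intersection count and subtract it afterwards, and that you make explicit the (harmless) remark about isolated vertices.
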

\begin{proof}
Let $m$ be the total number of edges in $\mc H$, and let $e$ be one such edge. Double-counting pairs $(v, f)$ where $f \in E(\mc H) \setminus \{e\}$ and $v \in e \cap f$, we get $r(\Delta - 1) \geq (m - 1)t$, or
\begin{equation} \label{eqn:rtd_dblcount}
\left( \Delta - 1 \right) \frac{r}{t} + 1 \ge m .
\end{equation}
Let $u$ be a vertex of maximum degree $\Delta$ and let $P$ be the part of the $r$-partition that contains $u$. Since every edge is incident to a unique vertex in $P$, by looking at the edges incident to each vertex in $P$ we get 
\begin{equation} \label{eqn:rtd_partbound}
m \geq (|P| - 1) \delta + \Delta \ge ( \tau( \m H) - 1) \delta + \Delta,
\end{equation}
where the final inequality follows from the fact that $P$ is a vertex cover. Combining the upper and lower bounds on $m$ then gives the desired result. 
\end{proof}

In particular, this restricts the cover number of $d$-regular $(r,t)$-graphs, and, if $d < r$, the bound we obtain is smaller than that derived from Frankl and F\"uredi~\cite{frafur86} and Calderbank~\cite{calderbank87}. As with their results, we can characterise the hypergraphs achieving equality, for which we require a couple more design-theoretic definitions.

\begin{dfn} \label{def:dual_resolvable}
Given a hypergraph $\m H$, the \emph{dual hypergraph} $\m H^D$ has $V(\m H^D) = E(\m H)$ and \[E(\m H^D) =~\{ \{ e~\in E(\m H) : u \in e \} : u \in V(\m H) \};\] that is, we transpose the incidence relation between vertices and edges. Also, we say a $2$-$(v,r,t)$ design is \emph{resolvable} if its edges can be partitioned into perfect matchings.
\end{dfn}

Now we can state our result for the regular setting.

\begin{cor} \label{cor:regular}
If $\m H$ is a $d$-regular $(r,t)$-graph, then
\[ \tau(\m H) \le \frac{r}{t} - \frac{r}{dt} + \frac{1}{d}, \]
with equality if and only if $\m H$ is the dual of a resolvable $2$-$(v,d,t)$ design.
\end{cor}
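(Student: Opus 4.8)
The upper bound follows immediately by specialising Lemma~\ref{lem:rtd_counting} to the regular case: substituting $\Delta = \delta = d$ into the bound $\tau(\m H) \le \left(\frac{\Delta-1}{\delta}\right)\frac{r}{t} - \frac{\Delta-\delta-1}{\delta}$ gives exactly $\frac{r}{t} - \frac{r}{dt} + \frac{1}{d}$. Hence the real content is the equality characterisation, which I would extract by tracking precisely when the inequalities feeding into Lemma~\ref{lem:rtd_counting} become equalities.

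Recall that the lemma combines an upper bound $m \le (\Delta-1)\frac{r}{t}+1$ on the number of edges $m$, obtained by double-counting pairs $(v,f)$ through a fixed edge $e$, with a lower bound $m \ge (\tau(\m H)-1)\delta + \Delta$ coming from a part $P$ through a maximum-degree vertex. My first observation is that in the $d$-regular case the lower bound is \emph{automatically} tight: each of the $\card{P}$ vertices of $P$ lies in exactly $d$ edges and every edge meets $P$ once, so $m = d\card{P}$ exactly, and the counting argument in the preamble to this corollary shows that every part is in fact a \emph{minimum} cover, whence $\card{P} = \tau(\m H)$. Thus all slack is concentrated in the double-counting bound. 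Chasing its equality case, I note that in the regular case $\sum_{f \ne e}\card{e\cap f} = \sum_{v\in e}(d-1) = r(d-1)$ holds exactly for \emph{every} edge $e$, so $r(d-1) \ge (m-1)t$ is an equality precisely when $\card{e\cap f} = t$ for all $f \ne e$. Since $m$ is fixed, attaining $m = \frac{r(d-1)}{t}+1$ forces this for every $e$ simultaneously. I therefore expect to conclude that equality in the corollary holds if and only if $\m H$ is $d$-regular and \emph{strictly} $t$-intersecting, meaning that any two distinct edges meet in exactly $t$ vertices.

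It then remains to identify the $d$-regular strictly $t$-intersecting $(r,t)$-graphs with the duals of resolvable designs, which I would do by direct inspection of $\m H^D$. Its $m$ vertices are the edges of $\m H$, and each vertex $u$ of $\m H$ yields a block $\{e : u \in e\}$ of size $d(u)=d$, so $\m H^D$ is $d$-uniform on $m$ points. Two points of $\m H^D$, namely edges $e,f$ of $\m H$, lie in a common block exactly when some vertex lies in $e\cap f$, so they share precisely $\card{e\cap f}=t$ blocks; hence $\m H^D$ is a $2$-$(m,d,t)$ design in the sense of Definition~\ref{def:design}. Finally, each part $P_j$ of $\m H$ induces the set of blocks of $\m H^D$ arising from its vertices, and since every edge of $\m H$ meets $P_j$ in exactly one vertex, these blocks partition the point set of $\m H^D$; thus each part gives a perfect matching and the $r$ parts furnish a resolution of $\m H^D$. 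Reading these correspondences in reverse, and using that $(\m H^D)^D = \m H$, shows conversely that the dual of any resolvable $2$-$(v,d,t)$ design is a $d$-regular strictly $t$-intersecting $(r,t)$-graph attaining equality, completing the characterisation.

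With the upper bound being routine, the main work — and the step most prone to error — is the bookkeeping of this last paragraph: one must verify carefully that uniformity of $\m H^D$ matches regularity of $\m H$, that the pairwise intersection sizes translate exactly into the design parameter $t$, and that the $r$-partition of $\m H$ corresponds to a genuine resolution into parallel classes, all while ensuring that dualising twice returns $\m H$ itself.
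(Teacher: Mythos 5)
Your proposal is correct and follows essentially the same route as the paper: substituting $\Delta=\delta=d$ into Lemma~\ref{lem:rtd_counting} for the bound, locating all slack in the double-counting inequality (the part-based lower bound being automatically tight in the regular case) to characterise equality as being strictly $t$-intersecting, and then translating regularity, exact pairwise intersections, and the $r$-partition into uniformity, the design condition, and a resolution of the dual, with the converse obtained by reading the correspondence backwards.
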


For example, for a prime power $q$ and dimensions $1 \le k < n$, the $k$-dimensional affine subspaces in $\mathbb{F}_q^n$ form a resolvable $2$-$(q^n, q^k, \binom{n-1}{k-1}_q)$ design,\footnote{Where $\binom{n-1}{k-1}_q = \frac{(q^{n-1}-1)(q^{n-2} - 1) \hdots (q^{n-k+1} - 1)}{(q^{k-1} - 1)(q^{k-2)-1} \hdots (q-1)}$ is the Gaussian binomial coefficient, which counts the number of $k$-dimensional spaces that contain a given pair of points.} whose dual is therefore a tight construction for Corollary~\ref{cor:regular}.\footnote{When $n = 2$, the dual is the truncated projective plane, and thus this construction generalises the classic tight construction for Ryser's Conjecture.} In fact, we have a rich and storied variety of extremal constructions, as the study of resolvable designs dates back to Kirkman's famous schoolgirl problem~\cite{kirkman} from 1857, which asked for resolvable $2$-$(15,3,1)$ designs. This was greatly generalised by Ray-Chaudhuri and Wilson~\cite{RCW1, RCW2}, who showed the existence of resolvable designs of all uniformities $r$ whenever $v$ is sufficiently large and the trivial divisibility conditions are satisfied. More recently, Keevash~\cite{keevash} resolved some long-standing conjectures by extending these results to designs of greater strength, while results of Ferber and Kwan~\cite{Ferber-Kwan19} suggest that, when $v \equiv 3 \pmod 6$, almost all of the exponentially (in $v^2$) many $2$-$(v,3,1)$ designs should be resolvable.

\begin{proof}[Proof of Corollary~\ref{cor:regular}]
The upper bound follows immediately from Lemma~\ref{lem:rtd_counting} by substituting $\Delta = \delta = d$.

For the characterisation of equality, first observe that the inequality~\eqref{eqn:rtd_dblcount} is always tight if and only if any two edges of $\m H$ share exactly $t$ vertices, in which case we say $\m H$ is strictly $t$-intersecting. Next, we note that the inequalities in~\eqref{eqn:rtd_partbound} are always tight in the regular setting; the first because all degrees are equal to $d$, and the second because, as argued at the beginning of Section~\ref{sec:strictly},  a part is always a minimum cover in a regular $r$-partite $r$-uniform hypergraph. Thus we see that we have equality if and only if $\m H$ is strictly $t$-intersecting.

Now suppose $\m H$ is $d$-regular $(r,t)$-graph that is strictly $t$-intersecting, and consider the dual hypergraph $\m H^D$. Since every vertex of $\m H$ has degree $d$, every edge of $\m H^D$ contains $d$ vertices. Furthermore, as every pair of edges in $\m H$ shares $t$ vertices, every pair of vertices in $\m H^D$ have $t$ common edges. Thus $\m H^D$ is a $2$-$(v,d,t)$ design, where $v$ is the number of edges in $\m H$. Finally, since each edge of $\m H$ contains exactly one vertex from any of the $r$ parts, a part corresponds to a perfect matching in $\m H^D$, with every vertex covered exactly once. Hence, since $\m H$ is $r$-partite, the edges of $\m H^D$ can be partitioned into $r$ perfect matchings; that is, $\m H^D$ is resolvable.

Conversely, the same reasoning shows that the dual $\m H$ of a resolvable $2$-$(v,d,t)$ design gives a strictly $t$-intersecting $d$-regular $r$-partite $r$-uniform hypergraph with $v$ edges, where $r$ is the number of perfect matchings in the resolution of the design. From our above remarks, this implies the dual achieves equality in the upper bound on $\tau(\m H)$.
\end{proof}

In the above proof, we saw that for a regular $(r,t)$-graph to have as large a cover number as possible, it must be strictly $t$-intersecting, with every pair of edges meeting in exactly $t$ vertices. It is therefore natural to ask what happens when we drop the condition of regularity, and only require the $(r,t)$-graph be strictly $t$-intersecting. Such study has previously been carried out in the setting of Ryser's Conjecture for ($1$-)intersecting families.

Recall that Ryser's Conjecture for intersecting $r$-partite hypergraphs was proved by Tuza~\cite{Tuza} for all $r \leq 5$. 
Franceti\'c, Herke, McKay and Wanless~\cite{Francetic17} showed that if we restrict ourselves to linear (that is, strictly $1$-intersecting) hypergraphs, then the conjecture is true for all $r \leq 9$. 
Inspired by this, we prove Conjecture~\ref{conj:weak} for strictly $t$-intersecting hypergraphs for a much wider range of parameters $r$ and $t$ than covered by Corollary~\ref{cor:comparisons}. 

\begin{thm} \label{thm:strictly}
Let $t \ge 1$ and $t < r \le t^2 + 3t - 1$ be integers. If $\mc H$ is a strictly $t$-intersecting $r$-partite hypergraph, then $\tau(\mc H) \leq r - t$. 
\end{thm}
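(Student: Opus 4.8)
The plan is to argue by contradiction: I will show that if $\tau(\cH) \ge r - t + 1$, then $\cH$ is forced to have enormous minimum degree, which a Fisher-type counting argument forbids. First I would dispose of small $r$. Since $\cH$ is \emph{strictly} $t$-intersecting, every pair of edges meets in exactly $t$ vertices, so applying Lemma~\ref{lem:2edges} to any two edges gives $t' = t$, and hence $\tau(\cH) \le \floor{(r-t)/2}+1$ when $r \le 3t-1$ and $\tau(\cH) \le 2r-5t+2$ when $r \ge 3t$. The relevant bound is at most $r-t$ whenever $t < r \le 4t-2$. So I may assume $r \ge 4t-1$ and, for contradiction, that $\tau(\cH) \ge r-t+1$.

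The key step is a \emph{trace} argument. Fix an edge $e$ and a $t$-subset $W \subseteq e$. I claim $C = e \setminus W$, a set of exactly $r - t$ vertices, is a cover unless $W = f \cap e$ for some edge $f \neq e$: indeed, for any $f \neq e$ we have $\card{f \cap e} = t$ exactly, so $(e \setminus W) \cap f = (f \cap e) \setminus W$ is nonempty precisely when $f \cap e \neq W$ (two distinct $t$-sets). As $\tau(\cH) \ge r - t + 1$, no such $C$ is a cover, so \emph{every} $t$-subset of $e$ must occur as $f \cap e$ for some edge $f$. Counting the $\binom{r-1}{t-1}$ subsets of $e$ through a fixed vertex $v$, each realised by a distinct edge through $v$ (distinct because $f\cap e$ determines the subset), yields $d(v) \ge \binom{r-1}{t-1}+1$; hence the minimum degree satisfies $\delta \ge \binom{r-1}{t-1}+1$.

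To reach a contradiction I would bound $\delta$ from above by counting. The $m$ edge-indicator vectors have Gram matrix $(r-t)I + tJ$, which is positive definite, so they are linearly independent and $m \le \card{V(\cH)}$. On the other hand, each part $P_j$ meets every edge exactly once, so $\sum_{v \in P_j} d(v) = m$ and thus $\card{P_j} \le m/\delta$; summing over the $r$ parts gives $\card{V(\cH)} \le rm/\delta$. Combining, $m \le rm/\delta$, i.e. $\delta \le r$. For $t \ge 3$ and $r \ge 4t-1$ one checks that $\binom{r-1}{t-1}+1 > r$ holds throughout $r \le t^2+3t-1$, contradicting $\delta \le r$. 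For $t = 2$ the two bounds force $\delta = r$, and then $m \le \card{V(\cH)} \le rm/\delta = m$ must be a chain of equalities, which forces every vertex to have degree exactly $r$; thus $\cH$ is $r$-regular, and Corollary~\ref{cor:regular} gives $\tau(\cH) \le \tfrac r2 - \tfrac1{2} + \tfrac1r < r - t$, a final contradiction.

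The main obstacle is the correct pairing of the two bounds. The trace argument produces a very strong \emph{lower} bound on the minimum degree, and the whole difficulty is to match it with an equally strong \emph{upper} bound. Controlling the maximum degree directly seems hard, because strictly $t$-intersecting families can be highly irregular; I therefore route everything through the minimum degree, where the Fisher-type inequality $m \le \card{V(\cH)}$ together with the $r$-partite part-cover bound is just sharp enough. The one delicate point is the borderline case $t = 2$, where the inequalities meet with equality and one must invoke the regular characterisation of Corollary~\ref{cor:regular} to close the argument.
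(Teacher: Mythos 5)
Your argument departs from the paper's after its first step, and where it applies it is correct---in fact stronger than the published statement. The trace argument (every $t$-subset of an edge must be realised as an exact trace $f \cap e$, hence $\delta \ge \binom{r-1}{t-1}+1$ whenever $\tau \ge r-t+1$) is precisely the paper's opening move. From there the paper goes a different way: it feeds this minimum-degree bound into Lemma~\ref{lem:rtd_counting} to force a huge \emph{maximum} degree, pigeonholes at a maximum-degree vertex to find $t+1$ edges $f_1,\dots,f_{t+1}$ whose pairwise intersections all equal a single $t$-set $S$, and then shows $S$ is itself a cover; that last step is where the hypothesis $r \le t^2+3t-1$ enters (an edge avoiding $S$ would force $t(t+2) \le r-t$). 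Your route instead caps the degree from above: Fisher's inequality gives $m \le |V(\cH)|$ (your Gram-matrix computation is valid, since all edges have size $r>t$ and pairwise intersections exactly $t$), the partite degree count gives $|V(\cH)| \le rm/\delta$, hence $\delta \le r$. For $t \ge 3$ this contradicts the trace bound outright, and for $t=2$ the forced chain of equalities yields $r$-regularity, which Corollary~\ref{cor:regular} then kills. Notably, your contradiction needs no upper bound on $r$ at all (for $t \ge 3$, $\binom{r-1}{t-1}+1 > r$ holds for every $r \ge 4t-1$, and the $t=2$ closing inequality holds for every $r \ge 7$), so your argument would actually prove the conclusion for \emph{all} $r>t$ when $t \ge 2$, strengthening the theorem in that range.

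However, there is a concrete gap: your case analysis never treats $t=1$, which the theorem includes. For $t=1$ the statement covers $1 < r \le 3$; your preliminary reduction via Lemma~\ref{lem:2edges} disposes only of $r \le 4t-2 = 2$, and the pair $(r,t)=(3,1)$ then falls through both branches of your dichotomy: there $\binom{r-1}{t-1}+1 = 2 < 3 = r$, so the bounds $\delta \ge 2$ and $\delta \le r$ are perfectly compatible, no regularity is forced, and no contradiction results. The paper's proof does cover this case (its pigeonhole-and-cover step works for $t=1$). The repair is easy---a strictly $1$-intersecting $3$-partite hypergraph is intersecting, so $\tau \le 2$ follows from Aharoni's theorem, from Tuza's result for intersecting hypergraphs with $r \le 5$, or from the linear-hypergraph result of Franceti\'c, Herke, McKay and Wanless, all cited in the paper---but as written your proof does not establish the theorem at $(r,t)=(3,1)$. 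A minor secondary point: your counting steps (the trace bound and $|P_j| \le m/\delta$) implicitly assume there are no isolated vertices, so you should delete them at the outset; this is harmless, as isolated vertices affect neither $\tau$ nor the intersection hypotheses.
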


\begin{proof}
For the sake of contradiction, suppose $\tau \geq r - t + 1$, and, for the sake of convenience, let $\delta = \delta(\mc H)$,  $\Delta = \Delta(\mc H)$ and $\tau = \tau(\mc H)$. 
 
Let $v$ be an arbitrary vertex of $\mc H$ and let $e$ be an edge through $v$. 
Since $\tau \geq r-t+1$, 
we have that for every set $S$ of $r - t$ vertices in $e \setminus \{v\}$, there exists an edge $f$ of $\mc H$ with $e \cap f \subseteq e \setminus S$. 
Since $\mc H$ is $t$-intersecting and $|e \setminus S| = t$, we must have $e \cap f = e \setminus S$, which in particular implies that $v \in f$. This shows that $d(v) \geq \binom{r - 1}{r - t} + 1$, and, since $v$ was arbitrary, we get $\delta \geq \binom{r - 1}{r - t} + 1$. 
Solving the inequality in Lemma~\ref{lem:rtd_counting} for $\Delta$, we obtain
\[\Delta \geq \frac{\delta(\tau - 1)}{r/t - 1} + 1,\]
which, upon substituting $\delta \geq \binom{r - 1}{r - t} + 1$ and $\tau \geq r - t + 1$, yields
\[\Delta \geq t\binom{r - 1}{r - t} + t + 1.\]

Now let $u$ be a vertex of degree $d(u) = \Delta$ and let $f_0$ be an edge through $u$. There are $\binom{r - 1}{t - 1} = \binom{r - 1}{r - t}$ choices of $t$-subsets of $f_0$ containing $u$, and every edge $f' \neq f_0$ through $u$ intersects $f_0$ in one of these sets. Since there are at least $t \binom{r - 1}{r - t} + t$ edges through $u$ other than $f_0$, by the pigeonhole principle there must exist a $t$-subset $S$ of $f_0$ containing $u$ for which there are at least $t + 1$ edges $f_1, \dots, f_{t + 1}$ with $f_0 \cap f_i = S$ for all $i$.  Since $\m H$ is strictly $t$-intersecting, we further have $f_i \cap f_j = S$ for all $i \neq j$. 

We claim that $S$ is a vertex cover. If not, there exists an edge $e$ such that $e \cap f_i \subseteq f_i \setminus S$ for all $0 \leq i \leq t + 1$. Since $f_0 \setminus S, \dots, f_{t + 1} \setminus S$ are disjoint sets, and $e$ can only contain one vertex from each part of the $r$-partition, we get $t(t + 2) \leq r - t$, contradicting our upper bound on $r$.

Thus, we have a vertex cover $S$ of size $t$. 
Since Conjecture~\ref{conj:weak} is known for $r \le 2t$, it follows that $\tau \le r - t$, contradicting our original supposition that $\tau \geq r - t + 1$. 
\end{proof}

Although the restriction of being strictly $t$-intersecting allows us to prove the bound from Conjecture~\ref{conj:weak} for a wider range of parameters $(r,t)$, we believe this is far from tight.  Indeed, in this setting, we even lack constructions that come close to the smaller bound of Theorem~\ref{thm:lowerbound}. The best constructions we have found thus far are the duals of resolvable designs, as given in Corollary~\ref{cor:regular}. As these are also regular, their cover numbers are smaller than $\frac{r}{t}$, significantly smaller than the upper bound of Theorem~\ref{thm:strictly}.

\begin{prob}
Prove that $\tau(\m H) \le \frac{r}{t}$ for any strictly $t$-intersecting $r$-partite $r$-uniform hypergraph $\m H$, or find constructions with larger cover numbers.
\end{prob}

\subsection{$s$-covers}
Another new direction is to ask for more from our vertex covers -- rather than just intersecting each edge, we could ask for a set that meets every edge in many vertices.

\begin{dfn}
Let $\m H$ be an $(r, t)$-graph.
For $s \geq 1$, we define an $s$-cover of $\m H$ to be a set $B \subseteq V(\m H)$ such that $\card{B \cap e} \ge s$ for every $e \in E(\m H)$.  We further define
\[\tau_s(\m H) = \min\{|B| : B \text{ is an } s\text{-cover of } \m H \}.\]
\end{dfn}

Observe that $\tau_1(\m H) = \tau(\m H)$. 
We can then generalise Ryser's Conjecture (in the intersecting case) by asking for the maximum of $\tau_s(\m H)$ over all $(r, t)$-graphs. If $s \le t$, then, since every pair of edges intersects in at least $t$ vertices, any edge $e \in E(\m H)$ is an $s$-cover, and so we always have $\tau_s(\m H) \le r$.

However, the problem is ill-posed if $s > t$.  For arbitrary $n \in \mathbb{N}$, we can take $\m H$ be the complete $r$-partite $r$-graph with parts $V_1, \hdots, V_r$, where $\card{V_1} = \hdots = \card{V_t} = 1$ and $\card{V_{t+1}} = \hdots =  \card{V_r} = n$.  It is then easy to see that $\tau_s(\m H) = (s-t)n + t$, and therefore $\tau_s(\m H)$ is unbounded for $(r,t)$-graphs. We shall thus require $1 \le s \le t \le r$.

\begin{dfn}
Given integers $1 \le s \le t \le r$, define
\[ \Ryser_s(r,t) = \max \{ \tau_s(\m H) : \m H \textrm{ is an } (r,t)\textrm{-graph} \}. \]
\end{dfn}

The case $s = 1$ is obviously what we have been talking about all along, and the following lemma shows how we can leverage our constructions from that case to obtain lower bounds when $s \ge 2$.

\begin{lem} \label{lem:scoverrecursive}
For all $1 \le s \le t \le r$ and $a \ge 1$,
\[ \Ryser_{s + a}(r + a, t + a) \ge \Ryser_{s}(r, t) + a. \]
\end{lem}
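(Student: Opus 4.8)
The plan is to prove the inequality by taking an extremal $(r,t)$-graph for $\Ryser_s$ and ``coning'' it with $a$ new apex parts. Concretely, fix an $(r,t)$-graph $\m H$ on parts $P_1,\dots,P_r$ with $\tau_s(\m H)=\Ryser_s(r,t)$ (such an $\m H$ exists, since the $s$-cover numbers are bounded above by $r$). Introduce $a$ new parts $Q_1,\dots,Q_a$, each a singleton $\{q_i\}$, and define $\m H'$ on the vertex set $V(\m H)\cup\{q_1,\dots,q_a\}$ whose edges are $e'=e\cup\{q_1,\dots,q_a\}$ for each $e\in E(\m H)$. First I would check that $\m H'$ is a legitimate $(r+a,t+a)$-graph: it is $(r+a)$-uniform and $(r+a)$-partite with the parts $P_1,\dots,P_r,Q_1,\dots,Q_a$, and since every pair of new edges satisfies $\card{e'\cap f'}=\card{e\cap f}+a\ge t+a$, it is $(t+a)$-intersecting. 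It therefore suffices to show $\tau_{s+a}(\m H')\ge \Ryser_s(r,t)+a$.

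The conceptual engine of the argument is a general monotonicity of the $s$-cover numbers, valid in any hypergraph: if $B$ is an $(s+1)$-cover and $v\in B$ is arbitrary, then $\card{(B\setminus\{v\})\cap e}\ge \card{B\cap e}-1\ge s$ for every edge $e$, so $B\setminus\{v\}$ is an $s$-cover. Iterating this ``peeling'' gives $\tau_{s'}(\m G)\ge \tau_s(\m G)+(s'-s)$ for all $s'\ge s\ge 1$ and any hypergraph $\m G$ admitting an $s'$-cover. This is the tool that lets us extract an $s$-cover of $\m H$, of controlled size, from a cover of $\m H'$.

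Now take any $(s+a)$-cover $B$ of $\m H'$ and set $k=\card{B\cap\{q_1,\dots,q_a\}}$ and $B_0=B\cap V(\m H)$. Since each $q_i$ lies in every edge, for every $e\in E(\m H)$ we have $\card{B\cap e'}=\card{B_0\cap e}+k\ge s+a$, so $B_0$ is an $(s+a-k)$-cover of $\m H$; note $s+a-k\ge s$ because $k\le a$. Applying the peeling bound with $s'=s+a-k$ yields $\card{B_0}\ge \tau_{s+a-k}(\m H)\ge \tau_s(\m H)+(a-k)=\Ryser_s(r,t)+(a-k)$. Hence $\card{B}=\card{B_0}+k\ge \Ryser_s(r,t)+a$, and since $B$ was an arbitrary $(s+a)$-cover this proves $\tau_{s+a}(\m H')\ge \Ryser_s(r,t)+a$, as desired.

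The one place that needs care---and the step I would flag as the main obstacle---is that a minimum $(s+a)$-cover of $\m H'$ need not contain all of the apex vertices $q_1,\dots,q_a$; if it did, the bound would be immediate. The role of the parameter $k$ together with the peeling monotonicity is precisely to handle this: every apex vertex omitted from the cover must be paid for by an extra unit of covering multiplicity on the old vertices, which the monotonicity converts back into an extra vertex of $B_0$. Once this trade-off is made quantitative, the two contributions $k$ and $a-k$ combine to give exactly the $+a$ we need.
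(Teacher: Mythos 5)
Your proof is correct and takes essentially the same approach as the paper: the identical construction (appending a fixed set of $a$ new vertices, forming $a$ new parts, to every edge of an extremal $(r,t)$-graph), followed by the same counting argument that converts an $(s+a)$-cover of the new hypergraph into an $s$-cover of the original one by discarding the apex vertices together with enough old vertices. Your explicit ``peeling'' monotonicity lemma is just a packaged form of the paper's one-line step of removing $a$ elements (including all covered apex vertices) from a minimum $(s+a)$-cover.
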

\begin{proof}
Let $\m H'$ be an $(r, t)$-graph with $\tau_{s}(\m H) = \Ryser_s(r,t)$.  
Form $\m H$ by adding the same set $S$ of $a$ vertices to each edge of $\m H'$.  
$\m H$ is then an $(r + a, t + a)$-graph. 
Let $B$ be a smallest $(s+a)$-cover of $\m H$.  By removing $a$ elements from $B$, including all members of $B \cap S$, we obtain an $s$-cover $B'$ of $\m H'$.  
Hence we must have $\card{B'} \ge \tau_{s}(\m H') = \Ryser_s(r,t)$, and thus $\Ryser_{s + a}(r + a, t + a) \ge \tau_{s+a}(\m H) = \card{B} \ge \Ryser_{s}(r, t) + a$.
\end{proof}

The next proposition extends Theorem~\ref{thm:exactvalue} to the case when $s \ge 2$.

\begin{prop} \label{prop:scover1}
If $r \le 3t - 2s$, then
\[ \Ryser_s(r,t) = \left\lfloor \frac{r-t}{2} \right\rfloor + s. \]
\end{prop}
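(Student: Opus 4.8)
The plan is to prove the two bounds separately. For the lower bound, I would invoke Lemma~\ref{lem:scoverrecursive} to reduce the $s$-cover problem to the ordinary ($1$-)cover problem, which we have already solved exactly in the relevant range via Theorem~\ref{thm:exactvalue}. Specifically, setting $a = s-1$ in Lemma~\ref{lem:scoverrecursive} gives
\[ \Ryser_s(r,t) = \Ryser_{1 + (s-1)}\bigl(r, t\bigr) \ge \Ryser_1(r - s + 1, t - s + 1) + (s-1). \]
Now $\Ryser_1 = \Ryser$, and provided the reduced parameters $(r', t') = (r - s + 1, t - s + 1)$ satisfy $t' + 1 \le r' \le 3t' - 1$, Theorem~\ref{thm:exactvalue} evaluates this to $\floor*{\frac{r' - t'}{2}} + 1 = \floor*{\frac{r-t}{2}} + 1$. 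Adding back the $s - 1$ then yields $\floor*{\frac{r-t}{2}} + s$, matching the claimed value. I would check that the hypothesis $r \le 3t - 2s$ implies the required bound $r' \le 3t' - 1$: indeed $3t' - 1 = 3t - 3s + 2$ and $r' = r - s + 1 \le 3t - 2s - s + 1 = 3t - 3s + 1 \le 3t' - 1$, so the reduction lands safely inside the window where Theorem~\ref{thm:exactvalue} applies.

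For the upper bound, the strategy is to generalise the two-edge argument of Lemma~\ref{lem:2edges}. The idea behind Observation~\ref{obs:cover} is that a candidate set $C$ fails to be a cover only if some edge $f$ avoids it, and then $\sum_i \card{f \cap e_i} \le \Delta_{\m H}(C; e_1, \dots, e_k)$; the same mechanism works for $s$-covers if we instead build a set $B$ and argue that any edge $f$ meeting $B$ in fewer than $s$ vertices would force $\sum_i \card{f \cap e_i}$ to be too small. Concretely, I would take two edges $e_1, e_2$ with $\card{e_1 \cap e_2} = t$ and select a set $B$ that contains enough of the common intersection and enough of the symmetric difference so that any edge $f$ with $\card{f \cap B} \le s - 1$ would violate $t$-intersection with $e_1$ or $e_2$. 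The cleanest route is probably to prove an $s$-cover analogue of Lemma~\ref{lem:2edges} in which one modifies the threshold in Observation~\ref{obs:cover} from $kt - 1$ to account for the $s$ vertices already ``spent'' ensuring $\card{f \cap B} \ge s$, so that the relevant inequality becomes $\Delta_{\m H}(B; e_1, e_2) \le 2t - 1$ after reserving $s$ slots. Optimising the split between the two parts of $B$ (as in the first case of Lemma~\ref{lem:2edges}, where one takes $s = \floor*{\frac{r-t'}{2}} + t' - t + 1$ vertices from $e_1 \cap e_2$) should yield a set of size $\floor*{\frac{r-t}{2}} + s$ whenever $r \le 3t - 2s$.

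The main obstacle I anticipate is the upper bound rather than the lower bound. The delicate point is getting the arithmetic of the reserved $s$ vertices to interact correctly with the case distinction in Lemma~\ref{lem:2edges}: one must verify that taking the analogue of the ``large intersection'' case of that lemma produces a set of the right size, and that the condition $r \le 3t - 2s$ is exactly what guarantees we stay in that case (rather than the $2r - 4t - t' + 2$ branch). I would want to check that with $t' = t$ the requisite inequality $s \le t$ needed to carve out the $s$ guaranteed-intersection vertices is compatible with the target size, and that the resulting $\Delta_{\m H}$ computation still returns at most $2t - 1$ even after subtracting the contribution of the $s$ reserved vertices. If the direct generalisation of Observation~\ref{obs:cover} proves awkward, an alternative is to apply the $s = 1$ upper bound from Theorem~\ref{thm:exactvalue} to a suitably reduced hypergraph obtained by deleting $s - 1$ common parts, mirroring the lower-bound reduction; combining this deletion argument with the recursive structure of Lemma~\ref{lem:scoverrecursive} should force equality and thereby pin down $\Ryser_s(r,t)$ exactly.
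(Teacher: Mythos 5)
Your proposal is correct and follows essentially the same route as the paper: the lower bound comes from Lemma~\ref{lem:scoverrecursive} with $a=s-1$ exactly as you describe (the paper quotes Theorem~\ref{thm:lowerbound} rather than Theorem~\ref{thm:exactvalue}, which also covers the degenerate case $r=t$ where your range check $t'+1\le r'$ fails), and the upper bound is precisely the reserved-slots two-edge argument you sketch: the paper takes $B$ to be $\floor*{\frac{r-t'}{2}}+s$ vertices inside an intersection $e_1\cap e_2$ of minimum size $t'\ge t$, and shows any edge $e_3$ with $\card{B\cap e_3}\le s-1$ satisfies $\card{e_1\cap e_3}+\card{e_2\cap e_3}\le 2(s-1+t'-\card{B})+(r-t')\le 2t'-1$, a contradiction. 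The only cosmetic difference is that the paper runs the argument with the minimum pairwise intersection size $t'$ throughout instead of assuming a pair meeting in exactly $t$, which sidesteps the case distinction you flag as a potential obstacle.
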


\begin{proof}
The lower bound, valid for all $1 \le s \le t \le r$, is an easy consequence of Lemma~\ref{lem:scoverrecursive} and Theorem~\ref{thm:lowerbound}:
\[ \Ryser_{s}(r, t) \geq \Ryser_{1}(r - s + 1, t - s + 1) + s - 1 \ge \floor*{\frac{r-t}{2}} + s. \]

For the upper bound, let $t \ge \tfrac{r + 2s}{3}$.
Let $\m H$ be an $(r, t)$-graph, and let $t' \geq t$ be the minimum size of an intersection of two edges. 
 We will show $\tau_s(\m H) \le \tau' = \floor*{ \frac{r-t'}{2} } + s$.  Let $e_1$ and $e_2$ be two edges intersecting in exactly $t'$ elements, and let $S = e_1 \cap e_2$.  By our bound on $t$, we have $t' \ge \tau'$.

Let $B$ be a set of $\tau'$ elements from $S$.  We claim that $B$ is an $s$-cover.  Indeed, suppose there was some $e_3 \in E(\m H)$ with $\card{B \cap e_3} \le s-1$.  Then $e_3$ can contain at most $s-1 + t' - \tau'$ elements from $S$.  In all parts outside $S$, $e_3$ intersects $e_1 \cup e_2$ in at most one vertex.  Thus
\[ \card{e_1 \cap e_3} + \card{e_2 \cap e_3} \le 2(s-1 + t' - \tau') + r - t' = r + 2s + t' - 2\tau' - 2 \le 2t' - 1, \]
which contradicts $e_3$ intersecting both $e_1$ and $e_2$ in at least $t'$ elements.
\end{proof}

Observe that the lower bound was proved using Lemma~\ref{lem:scoverrecursive}, reducing the problem to the $s=1$ case via $\Ryser_s(r,t) \ge \Ryser(r-s+1, t-s+1)$.  When $s = t$, this also reduces the problem to the classic setting, where we have much stronger lower bounds. Here we know that, whenever $r-t$ is a prime power, $\Ryser(r-t+1,1) \ge r-t$, and thus $\Ryser_t(r,t) \ge r-1$. The constructions of Haxell and Scott~\cite{HS17} further show $\Ryser_t(r,t) \ge r-4$ for all sufficiently large $r$.

Our final result uses the construction of Bustamante and Stein~\cite{bustastein} to significantly improve the lower bound for large $r$, whenever $s > \frac{t}{2}$.

\begin{prop}\label{prop:scover2}
Let $1 \leq s \leq t$ and suppose $r = t(q + 1)$ for some prime power $q$.  Then
\[ \Ryser_s(r, t) \ge s \left( \frac{r}{t} - 1 \right). \]
\end{prop}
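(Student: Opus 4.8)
The plan is to prove the bound by exhibiting a single $(r,t)$-graph whose $s$-cover number is at least $s(r/t - 1)$; since $\Ryser_s(r,t)$ is defined as a maximum, this suffices. The natural candidate, flagged in the footnote of Section~\ref{sec:strictly}, is the Bustamante--Stein hypergraph obtained from a truncated projective plane.

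First I would recall this construction. For a prime power $q$, deleting a point $v_0$ from the projective plane of order $q$ (removing $v_0$ from every line through it) yields the truncated projective plane $\cH'$: a $(q+1,1)$-graph whose $q+1$ parts are the lines through $v_0$, each of size $q$ after deleting $v_0$, and whose edges are the $q^2$ lines avoiding $v_0$. Each surviving point lies on $q$ lines other than the one joining it to $v_0$, so $\cH'$ is $q$-regular, strictly $1$-intersecting, and has exactly $q^2$ edges.

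Next I apply the Bustamante--Stein operation, replacing each vertex of $\cH'$ by a block of $t$ vertices to obtain an $(r,t)$-graph $\cH$ with $r = t(q+1)$. The crucial point is that, although the uniformity grows from $q+1$ to $t(q+1) = r$, neither the degrees nor the number of edges change: each of the $t$ copies of a vertex $v$ inherits the degree of $v$, so $\cH$ remains $q$-regular with exactly $q^2$ edges. (As in Corollary~\ref{cor:regular}, $\cH$ is also strictly $t$-intersecting and $r$-partite, but only the regularity and edge count will be used.)

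The heart of the argument is then a one-line double count, valid for any $d$-regular hypergraph $\cH$ with $m$ edges: if $B$ is an $s$-cover, then $d\card{B} = \sum_{v \in B} d(v) = \sum_{e \in E(\cH)} \card{B \cap e} \ge s m$, whence $\card{B} \ge sm/d$. Taking $d = q$ and $m = q^2$ gives $\tau_s(\cH) \ge sq = s(r/t - 1)$, and therefore $\Ryser_s(r,t) \ge \tau_s(\cH) \ge s(r/t-1)$. Every step is elementary once the construction is fixed, so I anticipate no real obstacle; the only point requiring care is verifying that the Bustamante--Stein blow-up leaves the degree and edge count untouched while merely inflating the uniformity, since it is exactly this that makes the bound $sm/d = sq$ independent of $t$.
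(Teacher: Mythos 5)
Your proposal is correct and follows essentially the same route as the paper's own proof: both take the Bustamante--Stein blow-up of the truncated projective plane of order $q$ (a $q$-regular $(t(q+1),t)$-graph with $q^2$ edges) and apply the same incidence double count, namely that a set meeting every edge at least $s$ times must have size at least $sq^2/q = sq = s(r/t-1)$. The only difference is presentational: you spell out the verification that the blow-up preserves regularity and the edge count, which the paper asserts without detail.
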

\begin{proof}
Let $\m H'$ be the truncated projective plane of order $q$, which is a $q$-regular $(q+1,1)$-graph with $q^2$ edges, and let $\m H$ be the hypergraph obtained by replacing each vertex of $\m H'$ with a set of $t$ vertices. $\m H$ is then a $q$-regular $(t(q+1), t)$-graph with $q^2$ edges.

Since each vertex covers $q$ edges, to cover all of the edges at least $s$ times, we require at least $\frac{sq^2}{q} = sq = s \left( \frac{r}{t} - 1 \right)$ vertices. Thus $\Ryser_s(r,t) \ge \tau_s(\m H) \ge s \left( \frac{r}{t} - 1 \right)$.
\end{proof}

It remains an open problem to find matching upper bounds in these ranges.

\begin{prob}
Determine $\Ryser_s(r,t)$, at least asymptotically, when $r$ is large and $1 < s < t$.
\end{prob}

\paragraph{Note added in proof} During the publication of our manuscript, it was brought to our attention that the $t=1$ case of Theorem~\ref{thm:kwise} (when $k \ge 3$) was earlier proven by Kir\'aly~\cite{kirsolo}, in the context of monochromatic components in edge-coloured hypergraphs. As observed by DeBiasio (personal communication), Kir\'aly's theorem can be used as a base case for induction on $t$, providing an alternative proof of Theorem~\ref{thm:kwise} for $k \ge 3$. For more information on various generalisations of Ryser's Conjecture, we refer the reader to the recent survey of DeBiasio, Kamel, McCourt and Sheats~\cite{DKMS20}.

\end{document}